\theoremstyle{plain}
\newtheorem{theorem}{Theorem}[section]
\newtheorem{lemma}[theorem]{Lemma}
\newtheorem{claim}[theorem]{Claim}
\newtheorem{corollary}[theorem]{Corollary}
\theoremstyle{definition} 
\newtheorem{remark}[theorem]{Remark}
\def\@gifnextchar#1#2#3{\let\@tempe#1\def\@tempa{#2}\def\@tempb{#3}%
  \futurelet\@tempc\@gifnch}
\def\@gifnch{\ifx\@tempc\@sptoken\let\@tempd\@tempb%
  \else\ifx\@tempc\@tempe\let\@tempd\@tempa\else\let\@tempd\@tempb\fi\fi\@tempd}
\def\SK@set#1{\left\{#1\right\}}
\def\SK@@set#1#2{\{#1\,:\,
    \begin{array}{@{}l@{}}#2\end{array}
\}}
\def\SK@mset#1{\left\{\!\!\left\{#1\right\}\!\!\right\}}
\def\SK@@mset#1#2{\{\!\!\{#1\,:\,
    \begin{array}{@{}l@{}}#2\end{array}
\}\!\!\}}
\def\BIG@set#1{\Big\{#1\Big\}}
\def\BIG@@set#1#2{\Big\{#1\:\Big|\:
    \begin{array}{@{}l@{}}#2\end{array}
\Big\}}
\newcommand{\Set}[1]{\@gifnextchar\bgroup{\SK@@set{#1}}{\SK@set{#1}}}
\newcommand{\Mset}[1]{\@gifnextchar\bgroup{\SK@@mset{#1}}{\SK@mset{#1}}}
\newcommand{\Bigset}[1]{\@gifnextchar\bgroup{\BIG@@set{#1}}{\BIG@set{#1}}}
\title{Weak saturation rank:\\ a failure of linear algebraic approach to weak saturation}
\author{Nikolai Terekhov\thanks{Department of Discrete Mathematics, Moscow Institute of Physics and Technology, Dolgoprudny, Russia; nikolayterek@gmail.com
}, \quad Maksim Zhukovskii\thanks{Department of Computer Science, University of Sheffield, Sheffield S1 4DP, UK; zhukmax@gmail.com}
}
\date{}
\begin{document}

\maketitle

\begin{abstract}
Given a graph $F$ and a positive integer $n$, the weak $F$-saturation number $\mathrm{wsat}(K_n,F)$ is the minimum number of edges in a graph $H$ on $n$ vertices such that the edges missing in $H$ can be added, one at a time, so that every edge creates a copy of $F$. Kalai in 1985 introduced a linear algebraic approach that became one of the most efficient tools to prove lower bounds on weak saturation numbers. If $W$ is a vector space spanned by vectors $w(e)$ assigned to edges $e$ of $K_n$ in such a way that, for every copy $F'\subset K_n$ of $F$, there exist non-zero $\lambda_e$, $e\in E(F')$, satisfying $\sum_{e\in E(F')}\lambda_e w(e)=0$, then $\mathrm{dim}W\leq \mathrm{wsat}(K_n,F)$. In this paper, we prove limitations of this approach: we show infinitely many $F$ such that, for every vector space $W$ as above, $\mathrm{dim}W<\mathrm{wsat}(K_n,F)$. We also suggest a modification of this approach that allows to get tight lower bounds even when the original linear algebraic approach is not sufficient. Finally, we generalise our results to random graphs, complete multipartite graphs, and hypergraphs.
\end{abstract}

\section{Introduction}
\label{sc:intro}

Cellular automata, which were introduced by von Neumann~\cite{Neumann} after a suggestion of Ulam \cite{Ulam}, these days describe and simulate a variety of processes in physics, biology, chemistry, cryptography, and other fields. Bollob\'{a}s \cite{bol} introduced an extensively studied graph bootstrap percolation, which is a particular case of monotone cellular automata, and is a substantial generalisation of $r$-neighborhood bootstrap percolation model having applications in physics; see, for example, \cite{Adler, Fontes,Morris}. Given a graph $F$, an {\it $F$-bootstrap percolation process} is a sequence of graphs $H_0\subset H_1\subset\cdots\subset H_m$ such that, for   $i=1,\ldots,m$, $H_i$ is obtained from $H_{i-1}$ by adding an edge that belongs to a copy of $F$ in $H_i$. A graph $H$ is {\it weakly $F$-saturated} in graph $G$, if there exists an $F$-bootstrap percolation process $H=H_0\subset H_1\subset\cdots\subset H_m=G$. The  minimum number of edges in a weakly $F$-saturated graph in $G$ is called the {\it weak $F$-saturation number} of $G$ and is denoted by $\mathrm{wsat}(G,F)$.

The exact value of $\mathrm{wsat}(G,F)$ is known only in some specific cases. Even when both $G$ and $F$ are cliques --- exactly the situation considered by Bollob\'{a}s in~\cite{bol} --- it took years to prove~\cite{Alon85,Frankl82,Kalai85,Kalai84} the conjecture of Bollob\'{a}s that $\mathrm{wsat}(K_n,K_s)={n\choose 2}-{n-s+2\choose 2}$ (here $K_n$ denotes a complete graph on $n$ vertices). All known proofs of this fact are linear algebraic, and a purely combinatorial proof was given by Bollob\'{a}s only for $s\leq 6$. Since then, linear algebra was applied for a few other pairs $(G,F)$, and it is certainly one of the most efficient tools, although for $F$ with weak connectivity properties and $G=K_n$ combinatorial approaches are known~\cite{TZ}.


In~\cite{Kalai85}, Kalai suggested a general method of finding $\mathrm{wsat}(G,F)$ using matroids on edges of $G$. 
 For a matroid $M=(E,\mathcal{I})$ and a subset $A\subseteq E$, we denote by $\mathrm{rk}_M(A)$, the rank of the submatroid of $M$ induced on $A$. Let us call a matroid $M=(E(G),\mathcal{I})$ on the edges of $G$ {\it weakly $F$-saturated}, if every copy $\tilde F$ of $F$ in $G$ is a {\it cycle} in $M$, i.e.
\begin{equation}
\forall e\in E(\tilde F) \quad \mathrm{rk}_M(E(\tilde F)\setminus e)=\mathrm{rk}_M(E(\tilde F)).
\label{eq:sat-rank-def}
\end{equation}
The {\it weak $F$-saturation rank} of $G$ is the maximum rank of a weakly $F$-saturated matroid $M$ on the edges of $G$, we denote it by $\mathrm{rk}$-$\mathrm{sat}(G,F)$.

\begin{lemma}[\cite{Kalai85}]
For any graphs $G,F$, $\mathrm{wsat}(G,F)\geq\mathrm{rk}$-$\mathrm{sat}(G,F)$.
\label{lm:Kalai}
\end{lemma}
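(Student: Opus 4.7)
The plan is to fix an arbitrary weakly $F$-saturated subgraph $H\subseteq G$ and an arbitrary weakly $F$-saturated matroid $M=(E(G),\mathcal{I})$, and to show that $|E(H)|\geq\mathrm{rk}(M)$. Taking the minimum over $H$ and the supremum over $M$ then yields the desired inequality. The engine is a single observation: an $F$-bootstrap percolation step does not increase matroid rank, because every edge it adds lies in the matroid closure of the edges already present.

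More concretely, I would unroll the definition of weak saturation to obtain a bootstrap sequence $H=H_0\subset H_1\subset\cdots\subset H_m=G$ where, for each $i\geq 1$, the new edge $e_i=E(H_i)\setminus E(H_{i-1})$ belongs to some copy $\tilde F_i$ of $F$ contained in $H_i$; in particular, $E(\tilde F_i)\setminus e_i\subseteq E(H_{i-1})$. Since $M$ is weakly $F$-saturated, the defining identity \refeq{sat-rank-def} applied to $\tilde F_i$ and $e_i$ reads
\[
\mathrm{rk}_M(E(\tilde F_i)\setminus e_i)=\mathrm{rk}_M(E(\tilde F_i)),
\]
which in matroid language says that $e_i$ lies in the closure of $E(\tilde F_i)\setminus e_i$, and hence in the closure of $E(H_{i-1})$. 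By the standard property of matroid closure, adding an element already in the closure preserves rank, so $\mathrm{rk}_M(E(H_i))=\mathrm{rk}_M(E(H_{i-1}))$.

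Iterating over $i=1,\ldots,m$ gives $\mathrm{rk}_M(E(H))=\mathrm{rk}_M(E(G))=\mathrm{rk}(M)$. Since the rank of any subset is bounded by its size, we conclude
\[
|E(H)|\geq\mathrm{rk}_M(E(H))=\mathrm{rk}(M),
\]
and taking infimum over $H$ and supremum over $M$ yields $\mathrm{wsat}(G,F)\geq\mathrm{rk}\text{-}\mathrm{sat}(G,F)$.

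There is no real obstacle here; the only subtlety is the translation between the ``cycle'' formulation in \refeq{sat-rank-def} and the closure formulation used in the induction step. I would state this translation explicitly (equality of ranks $\mathrm{rk}_M(A)=\mathrm{rk}_M(A\cup\{e\})$ is equivalent to $e$ lying in the closure of $A$) so that the monotonicity of rank along the bootstrap process is transparent, and to make clear why the hypothesis that \emph{every} copy of $F$ is a cycle in $M$ is exactly what is needed: the particular copy $\tilde F_i$ witnessing edge $e_i$ is not known in advance, so the condition must hold uniformly.
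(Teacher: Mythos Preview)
Your proposal is correct and follows essentially the same approach as the paper. The paper does not give a separate proof of Lemma~\ref{lm:Kalai} (it is attributed to~\cite{Kalai85}), but it proves the multigraph generalisation, Lemma~\ref{lm:Kalai_multi}, by exactly this rank-preservation-along-the-bootstrap argument; the only cosmetic difference is that the paper phrases the key step via ``extend an independent set in $E(\tilde F_i)\setminus e_i$ to a basis of $E(H_i)$'' rather than via closure, which is the same fact in different language.
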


Note that in most known applications, linear matroids are sufficient to get the exact value of $\mathrm{wsat}(G,F)$ from Lemma~\ref{lm:Kalai} (that is the rank is maximised over all linear matroids, and then it appears to coincide with $\mathrm{wsat}(G,F)$). 
 This linear algebraic approach, which is a stronger version of the Kalai's approach, was presented in~\cite{BBMR} as a lemma that asserts a lower bound on $\mathrm{wsat}(G,F)$ in terms of dimension of a vector subspace spanned by edges of $G$ considered as vectors in a certain domain vector space (actually, in~\cite{BBMR} the lemma is formulated in terms of a {\it vertex} bootstrap percolation, and reformulations for the {\it edge} process appeared in~\cite{KMM,Pikhurko}). For $G=K_n$, a standard choice for the ground vector space is an image of a quotient algebra of the tensor algebra $T^2\mathbb{R}^n$ under a linear map that is defined in a special way depending on the choice of $F$. Here, every vertex $i$ of $G$ is represented by a vector $e_i$ in an orthonormal basis in $\mathbb{R}^n$ and every edge $\{i,j\}$ is represented by $e_i\otimes e_j$ in $T^2\mathbb{R}^n$ (note that for any vector space $W$ over a field $\mathbb{F}$ and any map $\varphi:E(G)\to W$, there exists a quotient algebra $W'$ of $T^2\mathbb{F}^n$ with the natural epimorphism $\varphi':T^2\mathbb{F}^n\to W'$ so that $\varphi(\{i,j\})\leftrightarrow\varphi'(e_i\otimes e_j)$ is an isomorphism of the vector spaces spanned by $\varphi(\{i,j\}),\,\{i,j\}\in E(K_n),$ and $\varphi'(e_i\otimes e_j),\,\{i,j\}\in E(K_n)$, respectively). In particular, exterior algebras were used in~\cite{Kalai85} and symmetric algebras were used in~\cite{Kalai85,KMM} to construct the desired matroids.

Lemma~\ref{lm:Kalai} was exploited to find the exact values of $\mathrm{wsat}(G,F)$ when $G=K_n$, $F=K_s$ or $F=K_{s,s}$ in~\cite{Kalai85} ($K_{s,s}$ denotes complete bipartite graph with both parts of size $s$); $G=K_n$ and $F=K_{s,s+1}$ in~\cite{KMM}; $G=K_n$ and $F=K_s\setminus K_t$ in~\cite{Pikhurko}; $G=K_n$ and $F$ is a graph obtained from a $K_s$ and a graph from a certain family by adding all edges between them~\cite{PikhurkoPhD}; 
 $G$ and $F$ are grids (with an additional requirement that pattern grids are axis aligned with the host grid) in~\cite{MNS}. In all these cases it turns out that $\mathrm{wsat}(G,F)=\mathrm{rk}$-$\mathrm{sat}(G,F)$. Lemma~\ref{lm:Kalai} can be also used to find asymptotics of  $\mathrm{wsat}(G,F)$ when  $G=K_n$ and $F$ is a disjoint union of all non-isomorphic graphs with a fixed number of vertices and fixed number of edges (see~\cite{PikhurkoPhD}) and when $G=K_{n,n}$, $F=K_{s,t}$ (see~\cite{KMM}).

Pikhurko in~\cite{PikhurkoPhD} observed that some specific ways of constructing matroids (namely, count matroids) do not allow to get the exact value of $\mathrm{wsat}(K_n,F)$ for certain $F$ using Lemma~\ref{lm:Kalai} since ranks of these matroids are strictly less than $\mathrm{wsat}(K_n,F)$. He also suspected that gross matroids cannot be used to get the exact value of $\mathrm{wsat}(K_n,C_{\ell})$ for even cycles. In this paper, we prove that for infinitely many connected graphs $F$, there exists $\varepsilon=\varepsilon(F)>0$ such that, for large enough $n\geq n_0(F)$, and $G=K_n$, 
\begin{equation}
\mathrm{rk}\text{-}\mathrm{sat}(G,F)<(1-\varepsilon)\mathrm{wsat}(G,F),
\label{eq:fails}
\end{equation}
that is Lemma~\ref{lm:Kalai} cannot be used to get even the asympotics (in $n$) of  $\mathrm{wsat}(K_n,F)$. We prove this in Section~\ref{sc:sat-rank} by showing that, for any $F$, $\lim_{n\to\infty}\mathrm{rk}$-$\mathrm{sat}(K_n,F)/n=a_F$, where $a_F$ is a non-negative {\it integer} that depends only on $F$. On the other hand, there are graphs $F$ such that $\lim_{n\to\infty}\mathrm{wsat}(K_n,F)/n$ is not an integer (see, e.g.,~\cite{TZ}).

In~\cite{Alon85}, Alon proved that $\lim_{n\to\infty}\mathrm{wsat}(K_n,F)/n$ exists for any $F$. Later Tuza conjectured~\cite{Tuza} that, for every $F$, $\mathrm{wsat}(K_n,F)=c_Fn+O(1)$. The conjecture is known to be true for all connected graphs with minimum degree $\delta(F)=2$ and when $c_F=\delta(F)-1$ (say, for $F=K_s$ or $F=K_{s,t}$), see~\cite{TZ}. In Section~\ref{sc:sat-rank}, we prove the reformulation of Tuza's conjecture in terms of the weak saturation rank: $\mathrm{rk}$-$\mathrm{sat}(K_n,F)=a_Fn+O(1)$. It is worth noting that our proof strategy can be also applied to linear matroids. That is,  the maximum rank of a weakly $F$-saturated linear matroid $E(K_n)$ equals $\tilde a_Fn+O(1)$ for some $\tilde a_F\in\mathbb{Z}_{\geq 0}$. Indeed, the only crucial requirement on the family of matroids that we need is to be `subset-closed' (if $M=(E(G=K_n),\mathcal{I})$ is in the family and $U\subset V(G)$, then the matroid $M'=(E(G[U]),\mathcal{I}\cap U)$ induced on $U$ also belongs to the family), see Section~\ref{rk:generalisation_main_theorem}. 

Further, in Section~\ref{sc:new-method}, we introduce a new method that allows to apply Lemma~\ref{lm:Kalai} even when $c_F$ is not an integer by considering matroids on edges of multigraphs. This appears to be possible due to the following simple observation. Let $K_n^k$ be a multigraph version of $K_n$ with every edge having multiplicity $k$, and let $F_e^k$ be obtained from $F$ and $e\in E(F)$ by including each edge from $E(F)\setminus e$ exactly $k$ times and the edge $e$ --- once. Then 
$$
\mathrm{wsat}\left(K_n^k,\left\{F_e^k,\,e\in E(F)\right\}\right)\leq k\cdot\mathrm{wsat}(K_n,F)
$$ 
(the concept of weak saturation in multigraphs for a family of pattern graphs is naturally extended from the weak saturation in graphs --- see the formal definition in Section~\ref{sc:new-method}).

In Section~\ref{sc:random} we show that, for infinitely many connected graphs $F$, \eqref{eq:fails} holds for {\it typical} $G$ as well (i.e. with asymptotical probability 1 for the graph $G\sim G_{n,1/2}$ chosen uniformly at random from the set of all graphs on $[n]:=\{1,\ldots,n\}$). Finally, in Section~\ref{sc:hyper} we show that Lemma~\ref{lm:Kalai} fails to give a tight bound for other discrete structures that frequently appeared in the literature in the context of weak saturation: for directed graphs, multipartite graphs, and hypergraphs.

In Section~\ref{sc:discussions} we discuss possible future avenues and several open questions.










\section{Weak saturation rank of graphs}
\label{sc:sat-rank}

Here we state and prove the following theorem describing the behaviour of $\mathrm{rk}$-$\mathrm{sat}(K_n,F)$ for large enough $n$.

\begin{theorem}\label{th:asat-integer-1}
For every graph $F$, there exist integer constants $N_F,a_F,C_F$ that depend only on $F$ such that, for every $n\geq N_F$, 
$$
\mathrm{rk}\text{-}\mathrm{sat}(K_n,F)=a_Fn + C_F.
$$
\end{theorem}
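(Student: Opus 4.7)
The plan is to analyse the increment sequence $\Delta(n):=f(n)-f(n-1)$, where $f(n):=\mathrm{rk}\text{-}\mathrm{sat}(K_n,F)$, and show that it stabilises at an integer value $a_F$ for $n\geq N_F$. From Lemma~\ref{lm:Kalai} combined with Alon's linear bound $\mathrm{wsat}(K_n,F)=O(n)$, we obtain $f(n)=O(n)$, so $\Delta(n)$ is an integer sequence bounded above by a constant depending only on $F$. The theorem then follows by telescoping once stabilisation is established: $f(n)=a_F n + C_F$ with $C_F:=f(N_F)-a_F N_F$ integer.

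The central structural tool is the subset-closed property highlighted in the introduction: for any weakly $F$-saturated matroid $M$ on $E(K_n)$ and any $U\subseteq[n]$, the restriction $M|_{E(K_n[U])}$ is weakly $F$-saturated on $K_{|U|}$, whence $\mathrm{rk}(M|_{E(K_n[U])})\leq f(|U|)$. Specialising to $U=[n]\setminus\{v\}$ yields $\mathrm{rk}(M)-\mathrm{rk}(M|_{E(K_n-v)})\geq \Delta(n)$ for every vertex $v$. Combined with the basis estimate $\mathrm{rk}(M|_{E(K_n-v)})\geq |B|-d_B(v)$ for any basis $B$ of $M$ together with $\sum_v d_B(v)=2f(n)$, averaging extracts a vertex whose deletion drops the rank by at most $2f(n)/n$. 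This already shows $\Delta(n)=O(1)$ and furnishes, in an optimal matroid on $K_n$, a vertex whose removal yields a near-optimal weakly $F$-saturated matroid on $K_{n-1}$, enabling a comparison across consecutive values of $n$.

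To pin $\Delta(n)$ to a single integer for large $n$ I would combine two ingredients: (i) an inductive construction extending any optimal weakly $F$-saturated matroid on $K_{n-1}$ to one on $K_n$ with rank gain at least $a_F$, giving $\Delta(n)\geq a_F$ for $n\geq N_F$; and (ii) a pigeonhole over finitely many ``local matroidal types'' at a vertex --- finitely many thanks to the $F$-cycle condition constraining both the matroid on the star and its interactions with nearby edges --- yielding $\Delta(n)\leq a_F$ for $n\geq N_F$. Together (i) and (ii) force $\Delta(n)\equiv a_F$ eventually, and integrality of $a_F$ follows at once since $\Delta(n)$ is integer-valued.

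The main obstacle will be step (ii). A priori the matroid induced on the star of $n-1$ edges at a single vertex $v$ can have arbitrarily complex structure, so the heart of the argument is to extract, from the $F$-cycle condition alone, a bounded amount of ``effective'' local information at each vertex --- for instance, by recording the isomorphism type of the matroid on edges within a bounded-radius neighbourhood of $v$ in the hypergraph of $F$-copies --- and then invoke pigeonhole on sufficiently large $n$ to find a vertex realising an extremal type whose deletion preserves near-optimality on $K_{n-1}$. This finiteness of ``local types'' is the only place where the fixed structure of $F$ is essentially exploited, and it is where most of the work will lie.
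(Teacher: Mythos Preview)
Your outline correctly isolates the key structural ingredient (restriction of a weakly $F$-saturated matroid to an induced subgraph is again weakly $F$-saturated) and the right quantity to study ($\Delta(n)$). But the execution has a genuine gap, and the paper's proof shows how to close it with a single clean idea that your plan misses.

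Your averaging step is too weak. Picking an arbitrary basis $B$ of an optimal $M$ and averaging $d_B(v)$ over all $n$ vertices only yields
\[
f(n-1)\ \ge\ f(n)-\left\lfloor\tfrac{2f(n)}{n}\right\rfloor,
\]
because $\sum_v d_B(v)=2f(n)$. The factor $2$ is fatal: this inequality is satisfied, e.g., by $f(n)=\lfloor 3n/2\rfloor$, so by itself it cannot force the limit $f(n)/n$ to be an integer. You recognise the shortfall and propose to compensate with steps (i) and (ii), but (i) an explicit extension of an optimal matroid on $K_{n-1}$ to one on $K_n$ with guaranteed rank gain $\ge a_F$ is not something one should expect to construct directly, and (ii) finiteness of ``local matroidal types at a vertex'' is not at all clear --- the matroid on a star of $n-1$ edges can genuinely be arbitrary, and the $F$-cycle condition does not obviously cut this down to boundedly many types.

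The paper eliminates the factor $2$ at the source. Instead of averaging over all of $[n]$, it first observes that the graph $H$ consisting of all edges meeting $[v]$ (where $v=|V(F)|$) is weakly $F$-saturated in $K_n$; hence any optimal $M$ has a basis $B\subset E(H)$. Now every edge of $B$ contains \emph{at most one} vertex of $[n]\setminus[v]$, so averaging over those $n-v$ vertices gives a vertex $u$ with $d_B(u)\le \lfloor f(n)/(n-v)\rfloor$, and therefore
\[
f(n-1)\ \ge\ f(n)-\left\lfloor\tfrac{f(n)}{n-v}\right\rfloor.
\]
From this single one-sided inequality a two-line argument finishes: $f(n)/(n-v)$ is non-increasing, hence has a (nonnegative integer) limit $a$; writing $f(n)=a(n-v)+r(n)$ with $0\le r(n)<n-v$ and substituting back gives $r(n-1)\ge r(n)$, so $r(n)$ is eventually constant. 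No extension construction, no pigeonhole on matroid types, and no need for Alon's $O(n)$ bound.
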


\begin{proof}
We will use the following straightforward observation.
\begin{claim}
For any two graphs $G$ and $F$, if a matroid $M=(E(G),I)$ is weakly $F$-saturated and $H$ is weakly $F$-saturated in $G$, then $\mathrm{rk}_M(E(H))=\mathrm{rk}_M(E(G))$.
\label{cl:satgraph-preserve-rank}
\end{claim}
Let $v=|V(F)|$. Fix $n$ and assume that $M_n =\left({[n]\choose 2},\mathcal{I}_n\right)$ is a weakly $F$-saturated matroid with the maximum rank $f(n):=\mathrm{rk}$-$\mathrm{sat}(K_n,F)$. Let us note that the deletion of vertices preserve the equality~\eqref{eq:sat-rank-def} implying 
\begin{equation}
 \mathrm{rk}\text{-}\mathrm{sat}(K_{n-1},F)\geq\mathrm{rk}_{M_n}(E(K_n\setminus u))
\label{eq:asat_delete_vertex}
\end{equation} 
for every $u\in[n].$ It worths mentioning that an analogue of this fact is not necessarily true for the weak saturation: the deletion of vertices from a weakly saturated graph does not preserve the property of being weakly saturated, 
  and this is the main reason for different asymptotic behaviours of the weak saturation number and the weak saturation rank for certain graphs~$F$.

Let us consider the graph $H$ on $[n]$ containing all edges adjacent to a vertex from $[v]$ (i.e. $H\cong K_{1,\ldots,1,n-v}$). Clearly $H$ is weakly $F$-saturated in $K_n$. By Claim~\ref{cl:satgraph-preserve-rank}, $M_n$ has a basis $B\subset E(H)$. Since every edge of $H$ contains at most 1 vertex from $[n]\setminus[v]$ and $|B|=f(n)$, there exists $u\in[n]\setminus[v]$ that belongs to at most $\frac{f(n)}{n - v}$ edges from $B$. Then $\mathrm{rk}_{M_n}(K_n\setminus u)\geq f(n)-\left\lfloor\frac{f(n)}{n - v}\right\rfloor$. Due to (\ref{eq:asat_delete_vertex}), we get
\begin{equation}
  f(n-1) 
  \geq f(n) - \left\lfloor\frac{f(n)}{n - v}\right\rfloor.
\label{eq:th1-limit}
\end{equation}
Let us show that this is sufficient to derive the statement of Theorem~\ref{th:asat-integer-1}. The following claim completes the proof.

\begin{claim}
If a sequence $f(n)\in\mathbb{Z}_{\geq 0}$, $n\in\mathbb{Z}_{>0}$, satisfies~\eqref{eq:th1-limit} for all large enough $n$, then there exists an integer $a$ and an integer $C$ such that $f(n)=an+C$ for all large enough $n$.
\label{eq:general_concluding_claim}
\end{claim}

\begin{proof}
Since $f(n) - \left\lfloor f(n)/(n - v)\right\rfloor\ge (n-v-1)\cdot (f(n)/(n-v))$, the inequality
\eqref{eq:th1-limit} implies $f(n-1)/(n - v - 1)\ge f(n)/(n - v)$. Therefore, there exists
$$
a:=\lim_{n\to +\infty}\left\lfloor\frac{f(n)}{n - v}\right\rfloor.
$$
This $a$ is exactly the constant asserted by Claim~\ref{eq:general_concluding_claim}: for sufficiently large $n$, $f(n)=a(n-v)+r(n)$, where $0\leq r(n)<n-v$. Substituting this equality into~\eqref{eq:th1-limit}, we get
$$
r(n-1)\geq r(n)-\left\lfloor\frac{r(n)}{n-v}\right\rfloor=r(n).
$$
This is only possible if there exists $r\in\mathbb{Z}_{\geq 0}$ such that, for all large enough $n$, $r(n)=r$, completing the proof.
\end{proof}
\end{proof}

Let us recall that in~\cite{TZ}, it was proven that for every odd integer $k\geq 3$, there exists a connected graph $F$ with $\mathrm{wsat}(K_n,F)=\frac{k}{2}n+O(1)$. From Theorem~\ref{th:asat-integer-1}, we immediately get that, for infinitely many connected $F$, $\lim_{n\to\infty}\frac{\mathrm{wsat}(K_n,F)}{n}\neq\lim_{n\to\infty}\frac{\mathrm{rk}\text{-}\mathrm{sat}(K_n,F)}{n}$.

\subsection{Generalisation to subset-closed families of matroids.}
\label{rk:generalisation_main_theorem} 
Let $\mathcal{M}$ be a {\it subset-closed} family of matroids, i.e., for every matroid $M=(E(K_n),\mathcal{I})$ from $\mathcal{M}$ and for every $U\subset V(K_n)$, the induced matroid $(E(K_n[U]),\mathcal{I}\cap U)$ belongs to $\mathcal{M}$. Let $\mathrm{rk}$-$\mathrm{sat}_{\mathcal{M}}(n,F)$ be the maximum rank of a weakly $F$-saturated matroid $M\in\mathcal{M}$ on the edges of $K_n$. Then, clearly, the analogue of \eqref{eq:asat_delete_vertex} holds for $\mathrm{rk}$-$\mathrm{sat}_{\mathcal{M}}(n,F)$ as well. And then the whole proof can be applied for this relaxed version of weakly saturation rank, implying
\begin{theorem}
For any subset-closed family of matroids $\mathcal{M}$ and any graph $F$, there exist $a_{\mathcal{M},F}\in\mathbb{Z}_{\geq 0}$ and $C_{\mathcal{M},F}\in\mathbb{Z}$ such that 
$$
\mathrm{rk}\text{-}\mathrm{sat}_{\mathcal{M}}(n,F)=a_{\mathcal{M},F}n+C_{\mathcal{M},F}
$$ 
for all large enough $n$.
\label{th:asymp_general_matroids}
\end{theorem}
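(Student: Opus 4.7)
The plan is to rerun the proof of Theorem~\ref{th:asat-integer-1} inside the class $\mathcal{M}$, checking at each step that every matroid that shows up in the argument remains in $\mathcal{M}$ thanks to subset-closedness. I would set $f(n):=\mathrm{rk}\text{-}\mathrm{sat}_{\mathcal{M}}(n,F)$ and, for each $n$, fix an optimal weakly $F$-saturated matroid $M_n=(E(K_n),\mathcal{I}_n)\in\mathcal{M}$ of rank $f(n)$, with $v=|V(F)|$ as before.

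The first step is to re-establish the analogue of~\eqref{eq:asat_delete_vertex}. For any $u\in[n]$, the restriction $M_n'$ of $M_n$ to $E(K_n[[n]\setminus u])$ lies in $\mathcal{M}$ by subset-closedness. Deleting a vertex preserves the cycle property~\eqref{eq:sat-rank-def}, because every copy of $F$ in $K_{n-1}$ is also a copy of $F$ in $K_n$, so $M_n'$ is a weakly $F$-saturated matroid on $E(K_{n-1})$ belonging to $\mathcal{M}$. Hence $f(n-1)\ge \mathrm{rk}_{M_n}(E(K_n\setminus u))$, which is the only place where the hypothesis on $\mathcal{M}$ is actually used.

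Next I would repeat the averaging argument with the star-like graph $H\cong K_{1,\ldots,1,n-v}$, which is weakly $F$-saturated in $K_n$. Claim~\ref{cl:satgraph-preserve-rank} is purely matroid-theoretic and makes no reference to any matroid family, so it applies to $M_n$ unchanged and yields a basis $B\subset E(H)$ of size $f(n)$. Since every edge of $H$ meets $[n]\setminus[v]$ in at most one vertex, some $u\in[n]\setminus[v]$ is incident to at most $\lfloor f(n)/(n-v)\rfloor$ edges of $B$, so $\mathrm{rk}_{M_n}(E(K_n\setminus u))\ge f(n)-\lfloor f(n)/(n-v)\rfloor$. Combining this with the inequality from the previous paragraph reproduces the recurrence~\eqref{eq:th1-limit} for $f$, and Claim~\ref{eq:general_concluding_claim} then produces the required integers $a_{\mathcal{M},F}\in\mathbb{Z}_{\geq 0}$ and $C_{\mathcal{M},F}\in\mathbb{Z}$.

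There is no genuine obstacle: the original argument is almost entirely matroid-theoretic and touches the ambient class of matroids only through restriction to an induced subgraph. The single point that must be made explicit is where subset-closedness of $\mathcal{M}$ enters — precisely the step $f(n-1)\ge \mathrm{rk}_{M_n}(E(K_n\setminus u))$, which would be meaningless if the restricted matroid could fall outside the class we are maximising over. Once this is in place the remainder of the proof transfers verbatim.
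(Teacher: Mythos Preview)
Your proposal is correct and follows exactly the approach the paper takes: the paper simply observes that subset-closedness gives the analogue of~\eqref{eq:asat_delete_vertex} for $\mathrm{rk}\text{-}\mathrm{sat}_{\mathcal{M}}$, after which the entire proof of Theorem~\ref{th:asat-integer-1} transfers verbatim. You have correctly isolated the single place where the hypothesis on $\mathcal{M}$ is invoked, and your write-up is in fact more detailed than the paper's own one-line justification.
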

In particular, for graphs $F$ and $G$, we call a vector space $W$ {\it weakly $F$-saturated in $G$}, if there exists a map $\varphi:E(G)\to W$ so that $\{\varphi(e),\,e\in E(G)\}$ spans the entire $W$ and every copy of $F$ in $G$ maps to a cycle in $W$. Given a field $\mathbb{F}$, let $\mathrm{rk}$-$\mathrm{sat}_{\mathbb{F}}(G,F)$ be the maximum dimension of a weakly $F$-saturated vector space $W$ over $\mathbb{F}$. We let $\mathrm{rk}$-$\mathrm{sat}_{\mathrm{linear}}(G,F)=\max_{\mathbb{F}}\mathrm{rk}$-$\mathrm{sat}_{\mathbb{F}}(G,F).$\footnote{For an {\it infinite} $\mathbb{F}$, the condition that every copy $F'$ of $F$ is a cycle is equivalent to the following: there exist non-zero $\lambda_e\in\mathbb{F}$, $e\in E(F')$, so that $\sum_{e\in E(F')}\lambda_e\varphi(e)=0$.  For vector spaces over finite fields this condition is stronger than the requirement that every $F$ is a cycle. Since in the definition of $\mathrm{rk}$-$\mathrm{sat}_{\mathrm{linear}}(G,F)$ we maximise the dimension over all vector spaces and since any vector space over a finite field can be extended to a vector space over an infinite field, the saturation rank can be defined as the maximum dimension of a vector space with the above stronger condition.}
\begin{corollary}
For every graph $F$ and every field $\mathbb{F}$, there exist $\tilde a_F,\tilde a_{\mathbb{F};F}\in\mathbb{Z}_{\geq 0}$ and $\tilde C_F,\tilde C_{\mathbb{F};F}\in\mathbb{Z}$ so that 
$$
\mathrm{rk}\text{-}\mathrm{sat}_{\mathbb{F}}(K_n,F)=\tilde a_{\mathbb{F};F}n+\tilde C_{\mathbb{F};F},\quad
\mathrm{rk}\text{-}\mathrm{sat}_{\mathrm{linear}}(K_n,F)=\tilde a_{F}n+\tilde C_F
$$ 
for all large enough $n$.
\label{cor:linear}
\end{corollary}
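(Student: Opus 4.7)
The plan is to derive Corollary~\ref{cor:linear} as an essentially immediate consequence of Theorem~\ref{th:asymp_general_matroids}, applied to two natural families of linear matroids.

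For a fixed field $\mathbb{F}$, let $\mathcal{M}_{\mathbb{F}}$ be the family of all matroids $(E(K_n),\mathcal{I})$ (over all $n$) whose independent sets are exactly the subsets $A\subseteq E(K_n)$ such that $\{\varphi(e):e\in A\}$ is linearly independent in some $\mathbb{F}$-vector space $W$, for some map $\varphi:E(K_n)\to W$. I would first verify that $\mathcal{M}_{\mathbb{F}}$ is subset-closed: for $M\in\mathcal{M}_{\mathbb{F}}$ represented by $\varphi$ and any $U\subset V(K_n)$, the induced matroid on $E(K_n[U])$ is represented by the restriction $\varphi|_{E(K_n[U])}$, hence it again belongs to $\mathcal{M}_{\mathbb{F}}$. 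Next I would observe that $\mathrm{rk}\text{-}\mathrm{sat}_{\mathcal{M}_{\mathbb{F}}}(n,F)=\mathrm{rk}\text{-}\mathrm{sat}_{\mathbb{F}}(K_n,F)$: the rank of the matroid coincides with the dimension of the subspace spanned by $\{\varphi(e):e\in E(K_n)\}$, and the matroid cycle condition~\eqref{eq:sat-rank-def} on every copy of $F$ is exactly the defining condition of a weakly $F$-saturated vector space (the equivalence with the formulation involving explicit nonzero coefficients is the content of the footnote to the corollary). Applying Theorem~\ref{th:asymp_general_matroids} to $\mathcal{M}_{\mathbb{F}}$ then yields the first asserted equality with $\tilde a_{\mathbb{F};F}=a_{\mathcal{M}_{\mathbb{F}},F}$ and $\tilde C_{\mathbb{F};F}=C_{\mathcal{M}_{\mathbb{F}},F}$.

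For the second equality, set $\mathcal{M}_{\mathrm{linear}}:=\bigcup_{\mathbb{F}}\mathcal{M}_{\mathbb{F}}$, where the union ranges over all fields. Since restriction to an induced subgraph does not alter the base field of a linear matroid, $\mathcal{M}_{\mathrm{linear}}$ is again subset-closed. By definition, $\mathrm{rk}\text{-}\mathrm{sat}_{\mathcal{M}_{\mathrm{linear}}}(n,F)=\mathrm{rk}\text{-}\mathrm{sat}_{\mathrm{linear}}(K_n,F)$, and the supremum over fields is attained for each $n$ because the rank is trivially bounded by $\binom{n}{2}$. Applying Theorem~\ref{th:asymp_general_matroids} to $\mathcal{M}_{\mathrm{linear}}$ then gives the second asserted equality.

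The whole argument is essentially definitional bookkeeping, and I do not anticipate any serious obstacle. The only point that calls for a moment's care is the reconciliation between the two equivalent formulations of the cycle condition in the vector space setting, depending on whether the underlying field is finite or infinite; but this has already been addressed in the footnote to the corollary.
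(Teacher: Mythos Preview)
Your proposal is correct and is exactly the intended argument: the paper presents Corollary~\ref{cor:linear} as an immediate consequence of Theorem~\ref{th:asymp_general_matroids} applied to the subset-closed families of $\mathbb{F}$-representable matroids and of all linear matroids, precisely as you describe. The verification that these families are subset-closed and that the two notions of rank coincide is the only content, and you have handled it correctly.
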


This corollary can be used to show a moderately explicit description of a vector space spanned by $E(K_n)$ at which the maximum dimension $\mathrm{rk}$-$\mathrm{sat}_{\mathbb{F}}(K_n,F)$ is achieved. In particular, such vector spaces were used in~\cite{Kalai85,KMM} to prove tight lower bounds for $\mathrm{wsat}(K_n,K_s)$ and $\mathrm{wsat}(K_n,K_{s,t})$. In what follows, for the sake of simplicity of presentation, for a weakly $F$-saturated vector space $W$, we omit the map $\varphi$ and associate edges in $G$ with vectors from $W$.

\begin{theorem}
Let $F$ be a graph on $v$ vertices, $\mathbb{F}$ be a field, and let $a,n_0\in\mathbb{Z}_{\geq 0},$  $C\in\mathbb{Z}$ be such that, for every integer $n\geq n_0$, $\mathrm{rk}$-$\mathrm{sat}_{\mathbb{F}}(K_n,F)=a(n-v)+C.$ Then, for every weakly $F$-saturated in $K_n$ vector space $W$ over $\mathbb{F}$ with $\mathrm{dim}W=\mathrm{rk}$-$\mathrm{sat}_{\mathbb{F}}(n,F)$, there exist vectors $f_u\in\mathbb{F}^{n-v-C}$, $u\in[n]$ and $g_{(w,u)}\in\mathbb{F}^a$, $h_{(u,w)}\in\mathbb{F}^{C(a+1)}$, $u\neq w\in[n]$, such that 
$$
\{u,w\}=(f_u\otimes g_{(w,u)}+f_w\otimes g_{(u,w)})\oplus h_{(u,w)},\quad \text{over all }u\neq w\in[n].
$$
\end{theorem}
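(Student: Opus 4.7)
The plan is to induct on $n \geq v+C$, leveraging at each step the vertex-deletion argument from the proof of Theorem~\ref{th:asat-integer-1} in the linear-matroid form of Corollary~\ref{cor:linear}. The base case $n = v+C$ is trivial: here $a(n-v-C)=0$ and $\dim W = C(a+1)$, so one sets $V_2 := W$, $f_u = 0 \in \mathbb{F}^0$, $h_{(u,w)} = \{u,w\}$. For the inductive step, given $W$ on $K_n$, the deletion argument produces a vertex $u_0 \in [n]\setminus[v]$ --- after relabelling, $u_0 = n$ --- such that the subspace $W' \subset W$ spanned by the images of edges of $K_{n-1}$ satisfies $\dim W' = \dim W - a = a(n-1-v)+C$. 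By construction $W'$ is an extremal weakly $F$-saturated space on $K_{n-1}$, so the inductive hypothesis furnishes a decomposition $W' = V'_1 \oplus V_2$ (with $V_2$, identified with $W^{(v+C)}$, fixed independently of $n$) together with vectors $f'_u \in \mathbb{F}^{n-1-v-C}$, $g'_{(w,u)} \in \mathbb{F}^a$, $h'_{(u,w)} \in \mathbb{F}^{C(a+1)}$ witnessing the asserted form on edges of $K_{n-1}$.

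To lift the decomposition to $W$, I would fix a complement $U_n \cong \mathbb{F}^a$ of $W'$ in $W$, identify $\mathbb{F}^{n-v-C} = \mathbb{F}^{n-1-v-C} \oplus \mathbb{F}\,e_n$ so that the new tensor slot $e_n \otimes \mathbb{F}^a$ corresponds to $U_n$ inside $V_1 := V'_1 \oplus U_n$, and set $f_u = (f'_u, 0)$ for $u \in [n-1]$, $f_n = e_n$, while reusing $g_{(w,u)} = g'_{(w,u)}$ and $h_{(u,w)} = h'_{(u,w)}$ on edges of $K_{n-1}$. For an edge $\{n,w\}$ with $w \in [n-1]$ the asserted identity $\{n,w\} = (e_n \otimes g_{(w,n)} + f'_w \otimes g_{(n,w)}) \oplus h_{(n,w)}$ then forces $g_{(w,n)}$ to be the $U_n$-coordinate of $\{n,w\}$ under the fixed isomorphism, $h_{(n,w)}$ to be its $V_2$-coordinate, and crucially the $V'_1$-coordinate of $\{n,w\}$ to have the \emph{rank-one} tensor form $f'_w \otimes g_{(n,w)}$ for some $g_{(n,w)} \in \mathbb{F}^a$. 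The whole problem therefore reduces to choosing $U_n$ so that this rank-one form is achieved for every $w$ simultaneously.

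The final step is the main obstacle. Since any two complements of $W'$ in $W$ differ by a linear map $\delta : W/W' \to W'$, altering $U_n$ shifts the $V'_1$-component of $\{n,w\}$ by $\delta_1(\bar y_w) \in V'_1$, where $\bar y_w \in W/W' \cong \mathbb{F}^a$ is the class of $\{n,w\}$ and $\delta_1$ is the $V'_1$-part of $\delta$. The task becomes finding a single linear map $\delta_1 : \mathbb{F}^a \to V'_1$ with $L_w - \delta_1(\bar y_w) \in f'_w \otimes \mathbb{F}^a$ for every $w \in [n-1]$, where $L_w$ denotes the initial $V'_1$-component of $\{n,w\}$. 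I expect the technical core of the argument to be a compatibility check on the vectors $\{L_w, \bar y_w\}_{w<n}$ guaranteeing the solvability of this system; the essential input has to come from the weak $F$-saturation of $W$, since each copy of $F$ through vertex $n$ provides a relation $\sum_i \lambda_i\{n,w_i\} = \sigma \in W'$ whose $V'_1$-part, by the inductive hypothesis, is already expressible in the $f'_u$-tensors, simultaneously yielding $\sum_i \lambda_i \bar y_{w_i} = 0$ and a matching identity for $\sum_i \lambda_i L_{w_i}$. These saturation relations, combined with the tight dimension count $\dim W = \dim W' + a$ (which forbids any slack in the complementary direction), are what should force the desired $\delta_1$ into existence.
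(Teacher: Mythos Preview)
Your inductive framework is different from the paper's direct argument, and you have correctly pinpointed the decisive step --- but you have not carried it out. You reduce everything to finding a single linear map $\delta_1:\mathbb{F}^a\to V'_1$ with $L_w-\delta_1(\bar y_w)\in f'_w\otimes\mathbb{F}^a$ for every $w<n$, and then say you ``expect'' this system to be solvable because of saturation relations and the tight dimension count. That is not a proof. For vertices $w$ in the inductive core $S'$ one has $f'_w=0$, so the constraint becomes $\delta_1(\bar y_w)=L_w$; since $\{\bar y_w:w\in[v]\}$ already spans $W/W'$, this over-determines $\delta_1$, and you would still need to verify that (i) these prescriptions are mutually consistent, and (ii) the resulting $\delta_1$ simultaneously handles all $w\notin S'$, where the constraint is a genuine rank-one condition. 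Nothing in the generic cycle identity $\sum_i\lambda_i\{n,w_i\}\in W'$ that you write down forces either of these. A secondary issue: your base case $n=v+C$ presumes the linear formula holds there, whereas the hypotheses only guarantee it for $n\ge n_0$; the paper's argument for a fixed $n$ needs the formula only at $n$ and $n-1$.

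The paper avoids the correction-map problem by working at a single $n$ and identifying the full core $S$ of size $v+C$ at once, namely $[v]$ together with the $C$ vertices of degree $>a$ in a basis $B\subset E(H)$. A basis-exchange/extremality argument gives $\langle K_n[S]\rangle\cong\mathbb{F}^{C(a+1)}$ and $\{u,x\}\in\langle\{u,\nu_i(u)\}_i,\,K_n[S]\rangle$ for every $u\notin S$, $x\in[v]$; then --- and this is the one place $F$-saturation is actually invoked --- a copy of $F$ on $u$, $w$, and $v-2$ vertices of $[v]$ yields $\{u,w\}\in\langle\{u,\nu_i(u)\}_i,\,\{w,\nu_j(w)\}_j,\,K_n[S]\rangle$ for all $u,w\notin S$. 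These two containments are exactly what would make $\delta_1=0$ work in your scheme once one takes $U_n=\langle\{n,\nu_i(n)\}_i\rangle$, so the step you left open is equivalent in content to the paper's core argument; the inductive wrapper does not bypass it.
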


\begin{proof}
Due to Corollary~\ref{cor:linear}, there exist $a,n_0\in\mathbb{Z}_{\geq 0}$ and $C\in\mathbb{Z}$ such that, for all $n\geq n_0-1$, $\mathrm{rk}$-$\mathrm{sat}_{\mathbb{F}}(K_n,F)=a(n-v)+C=:d$. Fix $n\geq n_0$ and consider a map $\varphi:E(K_n)\to \mathbb{F}^{d}$ such that $\langle\varphi(e),\,e\in E(K_n)\rangle=\mathbb{F}^{d}$, every copy of $F$ in $K_n$ maps to a cycle in $\mathbb{F}^{d}$ (again, everywhere below in the proof $\varphi$ is omitted). 

Let us consider the weakly $F$-saturated (in $K_n$) graph $H$ containing all edges adjacent to a vertex from $[v]$. By Claim~\ref{cl:satgraph-preserve-rank}, $\mathbb{F}^{d}$ has a basis $B\subset E(H)$. Assume that some $u\notin[v]$ belongs to at most $a-1$ edges that have an image in $B$. Then the linear matroid spanned by all edges from $E(H)$ that do not touch $u$ has rank at least $|B|-(a-1)=a(n-1-v)+C+1>\mathrm{rk}$-$\mathrm{sat}_{\mathbb{F}}(K_{n-1},F)$ --- contradiction with Claim~\ref{cl:satgraph-preserve-rank}. 

So, we get that every vertex outside $[v]$ belongs to at least $a$ edges that have an image in $B$. Since the total number of such edges equals $a(n-v)+C$, we get that there are at least $n-v-C$ vertices outside $[v]$ that belong to exactly $a$ edges that are mapped into $B$. Without loss of generality, we assume that all vertices from $[n-C]\setminus[v]$ are such. Let $S=[v]\sqcup([n]\setminus[n-C])$. Note that all edges spanned by $S$ form a basis in $\langle K_n[S]\rangle$. Indeed, otherwise, let $\tilde B\supset B[S]$ be a basis in $\langle K_n[S]\rangle$. Then any basis in $\mathbb{F}^{d}$ consisting of $\varphi(e)$ for some $e\in E(K_n)$ that contains all vectors from $\tilde B$ must have a vertex $u\in[n]\setminus S$ such that there are at most $a-1$ edges touching $u$ and that are mapped into $\tilde B$. In a similar way, this leads to a contradiction with Claim~\ref{cl:satgraph-preserve-rank}. Thus, $\langle K_n[S]\rangle\cong\mathbb{F}^{C(a+1)}$.

For $u\in[n]\setminus S$ and $i\in[a]$, let $\nu_i(u)$ be the $i$-th neighbour of $u$ in $[v]$ such that $\{u,\nu_i(u)\}\in B$. Note that, for any $x\in[v]$, $\{u,x\}$ belongs to the vector space spanned by $\{u,\nu_i(u)\}\cup H[S]$ since otherwise some vector from $B$ that touches a vertex outside $S\cup\{u\}$ can be replaced with $\{u,x\}$, leading again to a contradiction with Claim~\ref{cl:satgraph-preserve-rank}.  Consider a copy of $F$ in $K_n$ that contains vertices $u,w\notin S$ and $v-2$ vertices from $[v]$. Since this copy is a cycle, $\{u,w\}\in\langle\{u,x\in [v]\}\cup\{w,x\in [v]\}\cup K_n[[v]]\rangle$, implying $\{u,w\}\in\langle\{u,\nu_i(u)\}\cup\{w,\nu_i(w)\}\cup K_n[S]\rangle$. In a similar way, for every $u\notin S$, $w\in S$, the vector $\{u,w\}$ belongs to the vector space spanned by $\{u,\nu_i(u)\}$ and $K_n[S]$. It remains to observe that we may assign independent vectors $f_u\in\mathbb{F}^{n-C-v}$ to $u\notin S$ in a way such that the vector space spanned by $\{u,\nu_i(u)\}$ over all $u\in [n]\setminus S$ and $i\in[a]$ is isomorphic to $\mathbb{F}^{n-C-v}\otimes\mathbb{F}^{a}$ due to an isomorphism that maps $\{u,\nu_i(u)\}$ to $f_u\otimes f_i(u)$ for some $f_i(u)\in\mathbb{F}^{a}$.
\end{proof}

\begin{remark}
In~\cite{Kalai85}, Kalai considered symmetric matroids and studied their ranks. A matroid $M$ on ${\mathbb{N}\choose 2}$ is {\it symmetric}, if the set of its circuits is invariant under the action of the symmetric group over $\mathbb{N}$. Let $M_n$ be the restriction of $M$ into ${[n]\choose 2}$. Kalai proved that $b_{M}:=\lim_{n\to\infty}\frac{\mathrm{rk}M_n}{n}$ exists and it is integer whenever $M$ is non-trivial in the sense that there exists $n$ such that ${[n]\choose 2}$ is not independent in $M$. Kalai observed an interesting relation between $b_M$ and circuits of $M$: $\mathrm{rk}M_n={n\choose 2}-{n-b_M\choose 2}$ for every $n\geq b_M$ if and only if $K_{b_M+2}$ is a circuit in $M$.

We claim that Theorem~\ref{th:asymp_general_matroids} immediately implies that $b_M$ exists and that $b_M\in\mathbb{Z}_{\geq 0}$. Thus (1) it gives an alternative proof of the assertion proved by Kalai and (2) it is stronger than the existence of an integer limit of the scaled rank of a symmetric matroid. Indeed, let $M$ be a symmetric non-trivial matroid on ${\mathbb{N}\choose 2}$. Consider the family $\mathcal{M}=\{M_n,\,n\in\mathbb{Z}_{>0}\}$ that comprises all restrictions of $M$ into ${[n]\choose 2}$. Let $n_0$ be such that ${[n_0]\choose 2}$ is not independent, and let $F$ be a graph on $[n_0]$ such that $E(F)$ is a circuit in $M_{n_0}$. Due to symmetry and the fact that any circuit is also a cycle, we get that, for every $n\geq n_0$, $M_n$ is weakly $F$-saturated, and then  Theorem~\ref{th:asymp_general_matroids} applies: $b_M=a_{\mathcal{M},F}\in\mathbb{Z}_{\geq 0}$.
\end{remark}

\section{New method}
\label{sc:new-method}

In this section we suggest a simple modification of Lemma~\ref{lm:Kalai} and show that it can be used to get the exact value of $\mathrm{wsat}(K_n,F)$ even when $c_F$ is not integer.\\

We define the weak saturation number and the weak saturation rank for multigraphs in literally the same way as for simple graphs, though we need a slightly more general concept --- weak saturation with respect to a family of graphs. Fix a multigraph $G$ (the set of edges of a {\it multigraph} on the set of vertices $V$ is a multiset of pairs from ${V\choose 2}$) and a family of multigraphs $\mathcal{F}$. The {\it weak $\mathcal{F}$-saturation number} of $G$ (denoted by $\mathrm{wsat}(G,\mathcal{F})$) is the minimum number of edges in a {\it weakly $\mathcal{F}$-saturated submultigraph of $\mathcal{F}$}, that is, in a multigraph $H$ such that $G$ can be obtained from it by adding missing edges (with multiplicity 1) one by one, each time creating a copy of some $F\in\mathcal{F}$. A matroid $M=(E(G),\mathcal{I})$ on the edges of the mulitgraph $G$ (note that different instances of the same  element in the multiset $E(G)$ are different elements in the ground set $E(G)$ of $M$) is {\it weakly $\mathcal{F}$-saturated}, if, for every $F\in\mathcal{F}$, every copy $\tilde F$ of $F$ in $G$ is a {\it cycle} in $M$. The {\it weak $\mathcal{F}$-saturation rank} of $G$ (denoted by $\mathrm{rk}$-$\mathrm{sat}(G,\mathcal{F})$) is the maximum rank of a weakly $\mathcal{F}$-saturated matroid on $E(G)$. The analogue of Lemma~\ref{lm:Kalai} holds for multigraphs as well. Let us prove it for the sake of completeness.

\begin{lemma}
For every multigraph $G$ and every family of multigraphs $\mathcal{F}$, we have that $\mathrm{wsat}(G,\mathcal{F})\geq\mathrm{rk}$-$\mathrm{sat}(G,\mathcal{F})$.
\label{lm:Kalai_multi}
\end{lemma}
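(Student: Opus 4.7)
The plan is to mimic Kalai's original argument for simple graphs, observing that nothing in the proof genuinely relies on simplicity of the host graph once we treat the ground set of $M$ as the multiset $E(G)$ with distinct elements for distinct instances of the same edge.

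First I would fix a weakly $\mathcal{F}$-saturated matroid $M=(E(G),\mathcal{I})$ of maximum rank $\mathrm{rk}\text{-}\mathrm{sat}(G,\mathcal{F})$, and fix a weakly $\mathcal{F}$-saturated submultigraph $H\subseteq G$ attaining the minimum $|E(H)|=\mathrm{wsat}(G,\mathcal{F})$. By the definition of weak $\mathcal{F}$-saturation, there is a sequence
$$
H=H_0\subset H_1\subset\cdots\subset H_m=G,
$$
where, for each $i\geq 1$, $H_i$ is obtained from $H_{i-1}$ by adding a single edge $e_i\in E(G)\setminus E(H_{i-1})$ (as a specific element of the multiset $E(G)$) such that $e_i$ lies in a copy $\tilde F_i$ of some $F\in\mathcal{F}$ inside $H_i$. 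In particular, all edges of $\tilde F_i$ other than $e_i$ already belong to $E(H_{i-1})$.

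The key step is to show that rank is preserved at each stage, i.e. $\mathrm{rk}_M(E(H_i))=\mathrm{rk}_M(E(H_{i-1}))$. Since $M$ is weakly $\mathcal{F}$-saturated, the copy $\tilde F_i$ is a cycle in $M$, so by~\eqref{eq:sat-rank-def} applied to $e=e_i$,
$$
\mathrm{rk}_M\bigl(E(\tilde F_i)\setminus e_i\bigr)=\mathrm{rk}_M\bigl(E(\tilde F_i)\bigr),
$$
which means $e_i$ lies in the closure (in $M$) of $E(\tilde F_i)\setminus e_i\subseteq E(H_{i-1})$. Consequently $e_i$ is in the closure of $E(H_{i-1})$, and adding it cannot increase the rank.

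Iterating gives $\mathrm{rk}_M(E(G))=\mathrm{rk}_M(E(H_m))=\mathrm{rk}_M(E(H_0))=\mathrm{rk}_M(E(H))\leq |E(H)|=\mathrm{wsat}(G,\mathcal{F})$, and the left-hand side equals $\mathrm{rk}\text{-}\mathrm{sat}(G,\mathcal{F})$ by the choice of $M$. There is no serious obstacle here; the only point one has to be careful about is bookkeeping in the multigraph setting, namely that the ``edges'' of $H_{i-1}$, $\tilde F_i$, and $G$ refer to specific elements of the multiset $E(G)$, so that the inclusion $E(\tilde F_i)\setminus e_i\subseteq E(H_{i-1})$ really is a set-theoretic inclusion in the ground set of $M$ and the cycle condition~\eqref{eq:sat-rank-def} applies verbatim.
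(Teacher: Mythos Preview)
Your proof is correct and follows essentially the same approach as the paper's: both fix a weakly $\mathcal{F}$-saturated matroid $M$ and a bootstrap sequence $H=H_0\subset\cdots\subset H_m=G$, then use the cycle condition~\eqref{eq:sat-rank-def} on the newly created copy $\tilde F_i$ to show that $e_i$ lies in the span of $E(H_{i-1})$, so the rank is preserved at every step. The only cosmetic difference is that you phrase this via the closure operator while the paper explicitly builds a basis of $M|_{E(H_i)}$ avoiding $e_i$; the content is identical.
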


\begin{proof}
Let $H$ be a weakly $\mathcal{F}$-saturated multigraph and let $M$ be a weakly $\mathcal{F}$-saturated matroid. It is sufficient to prove that $\mathrm{rk}M=\mathrm{rk}_M(E(H))\leq|E(H)|$. 
 Consider a sequence of multigraphs $H=H_0\subset H_1\subset\ldots\subset H_m=G$, where $E(H_i)\setminus E(H_{i-1})$ is a single (instance of) edge $e_i$ and $H_i$ contains a copy $F_i\cong F$ for some $F\in\mathcal{F}$ that, in turn, contains $e_i$. Let us show that, for every $i\geq 1$, $\mathrm{rk}_M(E(H_i))=\mathrm{rk}_M(E(H_{i-1}))$ --- this would complete the proof. Since $E(F_i)$ is a cycle, it contains an independent set $I$ of size $\mathrm{rk}_M(E(F_i))$ that does not contain $e_i$. So, $I\cup\{e_i\}$ is not independent. Let us extend it to a basis set $B\supset I$ of the matroid induced on $H_i$. Since it does not contain $e_i$ (otherwise,  $I\cup\{e_i\}\subset B$ must be independent), we get that $B$ is entirely in $H_{i-1}$, and so $|B|=\mathrm{rk}_M(E(H_{i-1}))$,~as~needed.
\end{proof}

Let us now introduce the method. Consider simple graphs $F$ and $G$. Let $G^k$ be a multigraph obtained from $G$ by including $k$ instances of every edge of $G$. For every, $e\in E(F)$ let us also consider a multigraph $F^k_e$ that contains each edge from $E(F)\setminus e$ exactly $k$ times and the edge $e$ --- once. Then
\begin{equation}
\mathrm{rk}\text{-}\mathrm{sat}\left(G^k,\{F^k_e,\,e\in E(F^k)\}\right)\leq
\mathrm{wsat}\left(G^k,\{F^k_e,\,e\in E(F^k)\}\right)\leq k\cdot\mathrm{wsat}(G,F).
\label{eq:new-method}
\end{equation} 
Indeed, the first inequality holds due to Lemma~\ref{lm:Kalai_multi}, while the second inequality immediately follows from the observation that, if $H$ is weakly $F$-saturated in $G$, then $H^k$ has $k|E(H)|$ edges and it is weakly $\{F^k_e,\,e\in E(F)\}$-saturated in $G^k$.~We~get
\begin{lemma}
 For any two simple graphs $G,F$, 
 $$
 \mathrm{wsat}(G,F)\geq\frac{1}{k}\cdot\mathrm{rk}\text{-}\mathrm{sat}\left(G^k,\{F^k_e,\,e\in E(F)\}\right).
 $$
\label{lm:rational-from-multi}
\end{lemma}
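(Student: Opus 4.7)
The plan is to verify the chain of inequalities displayed in \eqref{eq:new-method}, since the lemma is just a rearrangement of its outer terms. The first inequality $\mathrm{rk}\text{-}\mathrm{sat}(G^k,\{F^k_e\})\leq\mathrm{wsat}(G^k,\{F^k_e\})$ is a direct invocation of Lemma~\ref{lm:Kalai_multi}, which has already been stated and proved. Hence the entire substance of the proof lies in establishing the upper bound
$$
\mathrm{wsat}\bigl(G^k,\{F^k_e,\,e\in E(F)\}\bigr)\leq k\cdot\mathrm{wsat}(G,F).
$$

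To prove this, I would start with a weakly $F$-saturated simple subgraph $H\subseteq G$ with $|E(H)|=\mathrm{wsat}(G,F)$, and form the multigraph $H^k$ by taking $k$ instances of each edge. It is a submultigraph of $G^k$ with $k|E(H)|=k\cdot\mathrm{wsat}(G,F)$ edges, so it suffices to exhibit a $\{F^k_e\}$-bootstrap percolation process from $H^k$ to $G^k$. Fix an $F$-bootstrap sequence $H=H_0\subset H_1\subset\cdots\subset H_m=G$ with new edges $e_1,\ldots,e_m$ and witnessing copies $F_1,\ldots,F_m$ such that $e_i\in E(F_i)\subseteq E(H_i)$. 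In the multigraph, I insert edges in the lexicographic order: first all $k$ instances of $e_1$ one after another, then of $e_2$, and so on.

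The key observation to verify is that every single insertion creates a copy of some $F^k_e$. When adding the $j$-th instance of $e_i$ (for $1\leq j\leq k$), the current multigraph already contains $k$ instances of every edge of $F_i\setminus e_i$: such an edge is either an original edge of $H$ (hence present $k$ times in $H^k$ from the start) or one of $e_1,\ldots,e_{i-1}$ (whose $k$ instances have all been added in previous rounds). Together with the new instance of $e_i$ playing the role of the unique multiplicity-one edge, this is exactly a copy of $F^k_{e_i}$ containing the just-added edge, as required.

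The argument has no real obstacle; the only mildly subtle point is to pin down the correct insertion order in $G^k$ and to keep track of which instance of the new edge $e_i$ plays the role of the singleton edge of the pattern $F^k_{e_i}$. Combining the chain then yields the announced inequality.
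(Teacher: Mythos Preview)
Your proposal is correct and follows exactly the paper's approach: invoke Lemma~\ref{lm:Kalai_multi} for the first inequality in~\eqref{eq:new-method}, and for the second inequality take a minimum weakly $F$-saturated $H\subset G$ and argue that $H^k$ is weakly $\{F^k_e\}$-saturated in $G^k$. You simply spell out the percolation process in $G^k$ in more detail than the paper does.
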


Let us finish this section with an example of application of Lemma~\ref{lm:rational-from-multi}. Let us prove that, for $n$ divisible by $k\geq 3$, $\mathrm{wsat}(G=K_n,F=B_k)=\frac{k-1}{2}n$, where $B_k$ is a {\it dumb-bell} consisting of two disjoint $k$-cliques joined by a single edge connecting them. We have to note that combinatorial proofs of this equality are known~\cite{PikhurkoPhD,TZ}. Moreover, Pikhurko noted in~\cite{PikhurkoPhD} that, for odd $k$, this equality can be derived from Lemma~\ref{lm:Kalai_multi} by considering count matroids. Here we show that count matroids are also useful in the case of even $k$, but we shall defined them on multigraphs and apply Lemma~\ref{lm:rational-from-multi}. Let us notice that count matroids on edges of multigraphs were considered in the literature, see, e.g.,~\cite{LeeStreinu}.

The upper bound is straightforward --- a disjoint union of $n/k$ $k$-cliques is weakly $B_k$-saturated in $K_n$. For the lower bound, let us consider a matroid $M$ on $E(G^k)$ with independent sets $I$ satisfying $|I'|\leq v(I'){k\choose 2}$ for every $I'\subseteq I$, where $v(I')$ is the number of vertices touching edges from $I'$. The fact that $M$ is a matroid follows from submodularity of $v: 2^{[n]\choose 2}\to\mathbb{Z}_{\geq 0}$ and the fact that any submodular non-decreasing integer-valued function $\rho$ on the set of all subsets of a finite set $X$ defines a matroid on $X$ with circuits that satisfy $|C|\geq\rho(C)+1$ while $|C'|\leq\rho(C')$ for every proper $C'\subset C$ (see~\cite{ER-matroids},~\cite[Proposition 12.1.1]{Oxley}). It is easy to check directly that all copies of $F_e^k$, $e\in E(F^k)$, are cycles in $M$, and the union of $n/k$ disjoint $K_k^k$ is an independent set in $M$ implying the desired lower bound by Lemma~\ref{lm:rational-from-multi}.

\begin{remark}
Lemma~\ref{lm:rational-from-multi} implies the value of $\mathrm{wsat}(K_n,B_k)$ for all $n$ up to a constant additive term, that depends only on $k$. It can be further generalised to all $F$ comprising disjoint cliques of different sizes joined by a single edge and some other graphs $F$ with `weak' connectivity properties.
\end{remark}


\section{Weak saturation rank of random graphs}
\label{sc:random}

Let $p=\mathrm{const}\in(0,1)$, $G_n\sim G(n,p)$. In~\cite{KMMT-R}, it is proven that, for any $F$, whp 
\begin{equation}
\mathrm{wsat}(G_n,F)=(1+o(1))\mathrm{wsat}(K_n,F).
\label{eq:asympt_random_wsat}
\end{equation}

\begin{remark} 
For every positive integer $v$, whp, for any pair of vertices $x_1,x_2$, there exists a $v$-clique $F'$ in $K_n$ containing $x_1,x_2$ such that the spanning subgraph $F'\setminus\{x_1,x_2\}\subset F'$ obtained by the removal of the edge $\{x_1,x_2\}$ is also a subgraph of $G_n$ (see, e.g.,~\cite{Spencer}). Thus, whp $\mathrm{wsat}(G_n,F)\geq\mathrm{wsat}(K_n,F)$. In~\cite{Kal_Zhuk}, it was conjectured that whp $\mathrm{wsat}(G_n,F)=\mathrm{wsat}(K_n,F)$ for all $F$. This conjecture is known to be true, in particular, for cliques and complete bipartite graphs.
\end{remark}

As we noted in Section~\ref{sc:sat-rank}, for every odd integer $k\geq 3$, there exists a connected graph $F$ with $\mathrm{wsat}(K_n,F)=\frac{k}{2}n+O(1)$.
The fact that, for infinitely many connected $F$ there exists $\varepsilon>0$ such that whp $\mathrm{rk}\text{-}\mathrm{sat}(G_n,F)<(1-\varepsilon)\mathrm{wsat}(G_n,F)$ follows immediately from~\eqref{eq:asympt_random_wsat} and the following result.


\begin{theorem}\label{th:asat-integer-random}
Let $p=\mathrm{const}\in(0,1)$, $G_n\sim G(n,p)$. Let $F$ be a graph with $\mathrm{wsat}(K_n,F)=(c_F+o(1))n$ for some constant $c_F\geq 0$. Then whp $\mathrm{rk}$-$\mathrm{sat}(G_n,F)\leq \lfloor c_F\rfloor n+(\log n)^{O(1)}$.
\end{theorem}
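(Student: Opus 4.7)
The plan is to imitate the vertex-deletion argument of Theorem~\ref{th:asat-integer-1} in the random setting. Set $a := \lfloor c_F\rfloor$ and $v := |V(F)|$; the aim is to show whp $\mathrm{rk}\text{-}\mathrm{sat}(G_n,F) \leq an + (\log n)^{O(1)}$.

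First I would prove a \emph{covering set lemma}: whp there exists $S \subset V(G_n)$ with $|S| \leq (\log n)^{C_1}$ such that for every pair $\{x,y\}$ with $x,y \notin S$ there is a copy of $F$ in $G_n$ containing $\{x,y\}$ whose remaining $v-2$ vertices lie in $S$. For constant $p$ this is standard concentration: a uniformly random $S$ of polylogarithmic size admits many valid $F$-extensions for each pair, and a Chernoff plus union bound makes the event hold for all pairs simultaneously. Given such $S$, the subgraph $H \subset G_n$ of edges incident to $S$ is weakly $F$-saturated in $G_n$ (add missing edges outside $S$ one by one, each completing an $F$-copy via $S$), and the same $S$ remains an $F$-covering set for every induced subgraph $G_n[V']$ with $V' \supset S$, so it can be reused throughout the iteration below.

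Next, fix $S$ and a weakly $F$-saturated matroid $M$ on $E(G_n)$ with rank $r_n = \mathrm{rk}\text{-}\mathrm{sat}(G_n,F)$. For $k = n, n-1, \ldots, n_0+1$, where $n_0 := (\log n)^{C_2}$, given $V_k \supset S$ of size $k$, I would let $H_k$ be the subgraph of $G_n[V_k]$ of edges incident to $S$ and take (by Claim~\ref{cl:satgraph-preserve-rank}) a basis $B_k \subset E(H_k)$ of $M|_{E(G_n[V_k])}$, of size $\tilde r_k := \mathrm{rk}_M(E(G_n[V_k]))$. Since every edge of $H_k$ meets $V_k \setminus S$ in at most one vertex,
\[
\sum_{u \in V_k \setminus S} d_u^{B_k} \leq \tilde r_k,
\]
so some $u_k \in V_k \setminus S$ has $d_{u_k}^{B_k} \leq \lfloor \tilde r_k / (k-|S|)\rfloor$. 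Setting $V_{k-1}:= V_k \setminus\{u_k\}$ yields $\tilde r_{k-1} \geq \tilde r_k - d_{u_k}^{B_k}$. The integer-rounding trick from Theorem~\ref{th:asat-integer-1} then gives $d_{u_k}^{B_k} \leq a$ at every stage, \emph{provided} $\tilde r_k < (a+1)(k-|S|)$. Since $\tilde r_k \leq \mathrm{wsat}(G_n[V_k],F)$ and $c_F < a+1$ by the definition of $a$, it suffices to have $\mathrm{wsat}(G_n[V_k],F) \leq (c_F + o(1))k$ uniformly along the iteration. Telescoping from $k=n$ down to $n_0$ yields $r_n \leq \tilde r_{n_0} + a(n-n_0) \leq \binom{n_0}{2} + an = an + (\log n)^{O(1)}$, as claimed.

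The hard part will be the uniform weak saturation bound: since the choice of $u_k$ is adaptive, a single violating $V_k$ breaks the argument, so one needs $\mathrm{wsat}(G_n[V'],F) \leq (c_F + o(1))|V'|$ simultaneously for every $V' \supset S$ with $|V'| \geq n_0$. I would derive this by strengthening~\eqref{eq:asympt_random_wsat} quantitatively, either by proving an exponential tail bound $\Pr[\mathrm{wsat}(G(k,p),F) > (c_F + \varepsilon)k] = e^{-\omega(n)}$ and applying a union bound over the $\leq 2^n$ supersets of $S$, or by isolating a deterministic pseudorandomness property that implies the weak saturation upper bound and holds whp of every large induced subgraph of $G_n$. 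Either route constitutes the main technical effort; the covering set lemma, by contrast, is routine.
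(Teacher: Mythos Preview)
Your proposal is correct and lines up closely with the paper's proof. Both use a polylogarithmic covering set $S$ (the paper's $X = [\lfloor 6p^{-2}\ln n\rfloor]$, via Lemma~\ref{lm:star-like-choice}) to make the star-like subgraph $H$ weakly $F$-saturated, and both rely on the uniform bound $\mathrm{wsat}(G_n[V'],F)\le(c_F+o(1))|V'|$ for all induced subgraphs of size at least a polylogarithm---this is exactly your ``hard part'', handled in the paper by Lemma~\ref{lm:asymptotic-stability-strong} via your route~(b) (a deterministic pseudorandomness condition inherited from Lemma~\ref{lm:iranian}). The only real difference is in the last step: you iterate the vertex-deletion of Theorem~\ref{th:asat-integer-1}, peeling off one low-degree vertex at a time and telescoping down to size $n_0$; the paper instead takes a single basis $B\subset E(H)$ and shows in one stroke that fewer than $(\ln n)^{5v+2}$ vertices outside $X$ can have $B$-degree at least $\lfloor c_F\rfloor+1$ (else the restriction of $M_n$ to those vertices together with $X$ would have rank exceeding the weak-saturation number of that induced subgraph, contradicting Lemma~\ref{lm:asymptotic-stability-strong}), and then bounds $|B|$ directly. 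The one-shot argument is a bit shorter and avoids tracking a sequence of bases, but your iterative version works just as well and uses no additional ingredients.
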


To prove Theorem~\ref{th:asat-integer-random}, we need the following lemmas.

\begin{lemma}[\cite{KMMT-R}\footnote{Although this lemma is not stated in \cite{KMMT-R} explicitly, the proof of \cite[Theorem 1.3]{KMMT-R} is essentially the proof of this lemma. Indeed, authors prove that as soon as a graph on $[n]$ has properties from \cite[Lemma 2.2]{KMMT-R} and \cite[Lemma 2.3]{KMMT-R}, the graph has the weak saturation number at most $\mathrm{wsat}(K_n,F)+o(n)$, and the properties from \cite[Lemma 2.2]{KMMT-R} and \cite[Lemma 2.3]{KMMT-R} are to admit a clique factor and to contain a large common neighbourhood for every set of a given size.}]
Let $\varepsilon>0$, $\delta>0$, and let $F$ be a graph on $v$ vertices. There exists $C=C(\delta,\varepsilon,v)>0$ such that, for every integer $n\geq C$ and every graph $G$ on $[n]$ satisfying
\begin{itemize}
\item $[n]$ admits a partition $[n]=V_1\sqcup\ldots\sqcup V_m$ such that each $V_i$ has size at least $C$ and induces a clique in $G$,
\item every $5v$ vertices in $G$ have at least $\delta n$ common neighbours,
\end{itemize}
the following holds: $\mathrm{wsat}(G,F)\leq\mathrm{wsat}(K_n,F)+\varepsilon n$.
\label{lm:iranian}
\end{lemma}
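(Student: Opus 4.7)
The plan is to construct an explicit weakly $F$-saturated subgraph $H\subseteq G$ with $|E(H)|\leq \mathrm{wsat}(K_n,F)+\varepsilon n$, exactly as the lemma asserts. The construction uses the clique partition as scaffolding and the common neighborhood hypothesis to carry out the bootstrap inside $G$.

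First, I would form the intra-clique part of $H$. For each clique $V_i$ of the given partition, place inside $V_i$ a nearly-optimal weakly $F$-saturated subgraph $H_i$ of the complete graph on $V_i$. By Alon's theorem~\cite{Alon85}, the limit $c_F=\lim_{n\to\infty}\mathrm{wsat}(K_n,F)/n$ exists, so for $|V_i|\geq C$ with $C=C(\varepsilon,\delta,v)$ large enough we have $|E(H_i)|\leq (c_F+\varepsilon/4)|V_i|$. Summing, $\sum_i|E(H_i)|\leq (c_F+\varepsilon/4)n\leq \mathrm{wsat}(K_n,F)+\varepsilon n/2$, again using Alon's asymptotic. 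Since each $V_i$ is a clique in $G$, these edges lie in $G$; and since $H_i$ is weakly $F$-saturated in $K_{V_i}$, bootstrap inside $V_i$ completes each $V_i$ to a full clique in the growing graph.

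Next, I would add a set $S$ of at most $\varepsilon n/2$ extra inter-clique edges so that $H\cup S$ is weakly $F$-saturated in $G$. The second hypothesis guarantees that any $5v$ vertices have $\geq \delta n$ common neighbours in $G$, so for any partial frame of up to $v-2$ vertices we can find many extensions to a copy of $F$ lying entirely in $G$. After the cliques are complete, fix a spanning tree on $V_1,\ldots,V_m$ and, for each tree edge, add a bounded number of bridge edges between the corresponding cliques; since $m\leq n/C$, the total contribution is $O(m)\leq \varepsilon n/2$ for $C$ large. Then, using the common neighborhood property repeatedly, every remaining edge of $G$ between different cliques is added one by one: given an edge $\{u,w\}$ of $G$ with $u\in V_i$, $w\in V_j$, one locates a copy of $F$ in $G$ extending $\{u,w\}$ by suitably chosen $v-2$ vertices so that all the other $|E(F)|-1$ edges of this copy are already present at the current stage.

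The main obstacle is producing an appropriate ordering of the inter-clique edges together with, for each of them, a certifying copy of $F$ in $G$. Executing this step rigorously amounts to a careful scheduling argument: one must show that at every stage the set of already-bootstrapped edges is \emph{thick} enough that the common neighbourhood of the key vertices still contains sufficiently many vertices avoiding any small bad set, so that a suitable extension to a copy of $F$ in $G$ is available. Resolving this combinatorial scheduling, exploiting both the clique-factor structure and the $\delta n$-lower bound on common neighbourhoods, is the technical heart of the argument and essentially reproduces the scheme used in the proof of \cite[Theorem 1.3]{KMMT-R}.
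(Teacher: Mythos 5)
First, note that the paper does not prove this lemma itself: it is imported from \cite{KMMT-R}, with a footnote explaining that the proof of Theorem~1.3 there is essentially a proof of this statement. So there is no internal proof to compare against; the comparison is between your sketch and the cited argument. Your outline does follow the same general scheme (a clique factor carrying near-optimal weakly $F$-saturated graphs inside each part, then a bootstrap of the inter-clique edges of $G$ driven by the common-neighbourhood hypothesis), and its first half is sound: by Alon's theorem, for $C$ large each $V_i$ carries at most $(c_F+\varepsilon/4)|V_i|$ edges, these sum to at most $\mathrm{wsat}(K_n,F)+\varepsilon n/2$ for large $n$, and the bootstrap completes each clique.

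The second half, however, contains a genuine gap rather than a deferred routine verification. (i) The device ``fix a spanning tree on $V_1,\dots,V_m$ and add $O(1)$ bridge edges per tree edge'' fails as stated: the hypotheses do not guarantee even a single edge of $G$ between an arbitrarily chosen pair of cliques of the partition (for instance $G=K_n$ minus the complete bipartite graph between two parts of size $C$ satisfies both hypotheses), so the partner cliques must themselves be selected through the common-neighbourhood property (pigeonhole over the at most $n/C$ cliques yields a clique containing at least $\delta C$ common neighbours of any prescribed set of at most $5v$ vertices), and this selection has to be coordinated with the order of the subsequent bootstrap. (ii) More seriously, the sentence ``one locates a copy of $F$ in $G$ extending $\{u,w\}$ so that all the other edges are already present'' is precisely the statement that needs proof, and it hides a chicken-and-egg obstruction: to add the first edge from a vertex $u$ to anything outside its clique one needs $v-2$ vertices already joined, in the current graph, to both $u$ and the target; such vertices can only lie in $u$'s own clique or in pre-seeded positions, pre-seeding every vertex costs about $(v-2)n>\varepsilon n$ edges, and the naive propagation (seed one pair of cliques, join them, spread outwards) stalls because the $\geq\delta n$ common neighbours of a pair need not meet any prescribed clique or prescribed small set that is already joined to both endpoints. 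Designing the ordering and the invariants that overcome this is exactly the technical content of the proof in \cite{KMMT-R} that the footnote alludes to; since your write-up explicitly delegates it, what you have is an outline of the cited argument rather than a self-contained proof of the lemma.
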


\begin{lemma}
Fix a graph $F$ on $v$ vertices satisfying $\mathrm{wsat}(K_n,F)=(c_F+o(1))n$. 
Whp, for every induced subgraph $H$ of $G_n$ of size at least $\ln^{5v+2}n$, $\mathrm{wsat}(H,F)\leq(c_F+o(1))|V(H)|$.
\label{lm:asymptotic-stability-strong}
\end{lemma}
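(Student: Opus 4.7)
The plan is to apply the Iranian Lemma~\ref{lm:iranian} to the induced subgraph $H$ itself, with $|V(H)|$ playing the role of $n$. Given $\varepsilon>0$, set $\delta=p^{5v}/2$ and let $C=C(\delta,\varepsilon,v)$ be the threshold from Lemma~\ref{lm:iranian}. It suffices to show that whp, for every $U\subseteq[n]$ with $|U|=m\geq\ln^{5v+2}n$, the graph $G_n[U]$ satisfies both hypotheses of Lemma~\ref{lm:iranian} (applied with $G=G_n[U]$ and $n=m$): (a) $V(H)=U$ admits a partition into cliques of $G_n[U]$, each of size at least $C$; and (b) every $5v$-subset of $V(H)$ has at least $\delta m$ common neighbours in $G_n[U]$. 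Granted (a) and (b), Lemma~\ref{lm:iranian} yields $\mathrm{wsat}(H,F)\leq\mathrm{wsat}(K_m,F)+\varepsilon m=(c_F+\varepsilon+o(1))m$, and taking $\varepsilon=\varepsilon(n)\to 0$ sufficiently slowly gives the bound.

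For (a), first I would establish that whp every subset $U'\subseteq[n]$ with $|U'|\geq k_0(C,p)$ contains a $K_C$ in $G_n[U']$: Janson's inequality bounds the per-$U'$ failure probability by $\exp(-\Omega(|U'|^C))$ up to $p$-dependent constants, which easily beats the union-bound cost $\binom{n}{|U'|}\leq n^{|U'|}$ for $|U'|\geq\omega(1)$. Greedy peeling of $K_C$'s then partitions $V(H)$ into cliques of size $C$, up to an $O(1)$ leftover absorbed at negligible additive cost.

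For (b), the per-pair concentration is standard: for fixed $(U,S)$ with $|U|=m$ and $|S|=5v$, the common-neighbour count in $U\setminus S$ is $\mathrm{Bin}(m-5v,p^{5v})$, and Chernoff gives a per-pair failure probability $\exp(-\Omega(m))=\exp(-\Omega(\ln^{5v+2}n))$. To beat the combinatorial multiplicities $\binom{n}{m}$ and $\binom{m}{5v}$ in the range $m=\ln^{5v+2}n$, I would exploit the polylogarithmic slack in the threshold exponent $5v+2$ via a two-step exposure: first expose the edges incident to each $5v$-set $S\subseteq[n]$, which pins down $|N_{G_n}(S)|=(1+o(1))p^{5v}n$ uniformly in $S$ whp (a $\binom{n}{5v}$-union bound combined with Chernoff); then, for each such $S$ and each $m\geq\ln^{5v+2}n$, bound the number of bad $U\supseteq S$ of size $m$ with $|U\cap N_{G_n}(S)|<\delta m$ via a hypergeometric large-deviations estimate. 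Combining these bounds carefully across all $S$ and $m$ should deliver the required joint statement.

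The main obstacle will be the joint union bound in (b): the naive estimate $\binom{n}{m}\binom{m}{5v}\exp(-\Omega(m))$ does not directly give $o(1)$ in the polylogarithmic regime, so the two-step exposure and the precise calibration of the exponent $5v+2$ are essential. Managing this calibration cleanly --- ensuring that the polynomial factors $n^{5v}\cdot m^{5v}$ are absorbed by the polylogarithmic exponent $5v+2$ and that the sum over $m\in[\ln^{5v+2}n,n]$ converges --- is the technical heart of the proof.
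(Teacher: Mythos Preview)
Your plan has a genuine gap in part (b): the property you are trying to establish is simply \emph{false} whp, so no amount of union-bound engineering will rescue it. Concretely, fix any $5v$-set $S\subset[n]$; whp $|N_{G_n}(S)|\sim p^{5v}n$, so $[n]\setminus N_{G_n}(S)$ has size $(1-p^{5v}+o(1))n\gg \ln^{5v+2}n$. Now take $U$ to consist of $S$ together with $m-5v$ vertices from $[n]\setminus N_{G_n}(S)$. Then $S\subset U$, $|U|=m$, yet $S$ has at most $5v<\delta m$ common neighbours in $U$. Thus hypothesis (b) of Lemma~\ref{lm:iranian} fails for this $U$, and such adversarial $U$ exist for \emph{every} $S$. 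Your ``two-step exposure'' does not help: once the global neighbourhoods $N_{G_n}(S)$ are revealed, the question of whether a bad $U$ exists is purely deterministic, and the answer is yes. The hypergeometric bound you invoke only says that a \emph{random} $U$ is rarely bad, which is irrelevant when the lemma demands the conclusion for all $U$; equivalently, the number of bad $U$ is $\binom{n}{m}\exp(-\Omega(m))\gg 1$ in the polylog regime, exactly the obstruction you already noted for the naive bound.

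The paper's proof avoids this by \emph{not} applying Lemma~\ref{lm:iranian} directly to $H=G_n[U]$. Instead it shows that after deleting a set of at most $(\ln n)^{5v+1+o(1)}$ vertices from $U$, the remaining induced subgraph $\tilde H$ does satisfy both hypotheses. The key device is a layered notion of ``bad'' $t$-sets (those with few common neighbours in $V$ but all of whose $(t-1)$-subsets are good): the paper proves that whp, for every $V$ of size $\geq\ln^{5v+2}n$ and every $t\le 5v$, there are fewer than $(\ln n\ln\ln n)^t$ such $t$-sets. This is obtained by extracting, from any large family of $t$-sets, a long subsequence in which each set contributes a fresh vertex, so that the conditional badness probabilities multiply to $\exp(-\Omega(m)\cdot\ln n\ln\ln n)$ --- enough to beat the $\binom{n}{m}$ union bound. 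Peeling off the bad vertices level by level for $t=1,\ldots,5v$ costs only polylog many vertices; Lemma~\ref{lm:iranian} is then applied to $\tilde H$, and each deleted vertex is reattached with $O(\ln n\ln\ln n)$ edges.

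A minor point on (a): Janson gives failure probability $\exp(-\Omega(|U'|^2))$, not $\exp(-\Omega(|U'|^C))$, so the greedy peeling works only down to sets of size $\gtrsim\ln n\ln\ln n$ (not a constant $k_0(C,p)$); this is harmless once you adopt the deletion strategy above, since the polylog leftover is absorbed into the set of removed vertices.
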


\begin{proof}
Set $\delta=p^{5v}/2^{5v+1}$ and fix any $\varepsilon>0$. Let $C=C(\delta,\varepsilon,v)>0$ be the constant stated in Lemma~\ref{lm:iranian}. 

Fix a set $V\subset[n]$ of size $m\geq\ln^{5v+2} n$.
For $s\in\{0,1,\ldots,n\}$, let us call an $s$-set $\mathbf{x}\subset[n]$ {\it bad} with respect to $V$, if it has less than $(p/2)^s m$ common neighbours in $V$. Otherwise, the set is {\it good}. An empty set ($s=0$) is always good. 
  Now let us fix $(\ln n\ln\ln n)^t$ $t$-sets. They occupy at least $\ln n\ln\ln n$ vertices. Therefore, we may find a sequence of at least $\frac{1}{t}\ln n\ln\ln n$ $t$-sets $\mathbf{y}_1,\ldots,\mathbf{y}_g$ among them such that each $\mathbf{x}_i=\mathbf{y}_i\cap(\mathbf{y}_1\cup\ldots\cup\mathbf{y}_{i-1})$ is a {\it proper} subset of $\mathbf{y}_i$. Let $\mathbf{x}_1=\mathbf{x}_{g+1}=\varnothing.$ Then 
\begin{multline*}
 \mathbb{P}(\text{all }\mathbf{y}_i\text{ are bad, all }\mathbf{x}_i\text{ are good}) \leq
\prod_{i=1}^{g}
  \mathbb{P}(\mathbf{y}_i\text{ is bad}\mid\mathbf{y}_{j\leq i-1}\text{ are bad, }\mathbf{x}_{j\leq i}\text{ are good}).
  \end{multline*}
Note that, if $\mathbf{y}_i$ is bad, then it has at most $(p/2)^t m$ common neighbours in the set $V\setminus(\mathbf{y}_1\cup\ldots\cup\mathbf{y}_{i-1})$, which has size $|V|-O(\ln n\ln\ln n)$. Subject to the event that $\mathbf{y}_1,\ldots,\mathbf{y}_{i-1}\text{ are bad}$ and $\mathbf{x}_1,\ldots,\mathbf{x}_i\text{ are good}$, probability that $\mathbf{y}_i$ has at most $(p/2)^t m$ common neighbours in $V\setminus(\mathbf{y}_1\cup\ldots\cup\mathbf{y}_{i-1})$ is at most $\exp(-\Omega(m))$ by the Chernoff bound. Therefore,
\begin{align*}
 \mathbb{P}(\text{all }\mathbf{y}_i\text{ are bad, all }\mathbf{x}_i\text{ are good}) 
  \leq
 \exp(-\Omega(m)\ln n\ln\ln n).
\end{align*}
By the union bound over $t\in[5v]$, over $V\subset[n]$, and over $(\ln n\ln\ln n)^t$-tuples of $t$-sets, we get that whp, for every set $V$ of size at least $\ln^{5v+2} n$ and every $t\in[5v]$, there are less than $(\ln n\ln\ln n)^t$ bad $t$-sets with respect to this set $V$ such that all their $(t-1)$-subsets are good. Assuming that this property (we call it {\it property of bad sets}) holds in a graph on $[n]$ deterministically, we may fix a set $V$ and then distinguish a set $V'_1\subset[n]$ comprising all bad 1-sets (they occupy less  than $\ln n\ln\ln n$ vertices). Then we may consider the union $V'_2$ of all bad 2-sets lying entirely outside $V'_1$. Clearly, $|V'_2|\leq 2(\ln n\ln\ln n)^2$. Proceeding inductively, we get that we may exclude at most $\sum_{i=1}^t i(\ln n\ln\ln n)^i$ vertices from the graph and get rid of all bad subsets up to size $t$. 

In what follows, we assume that the property of bad sets holds deterministically in $G_n$. We shall also note that, by Janson's inequality~\cite[Theorem 2.18]{Janson}, in a fixed set of size $m\geq\ln n$, there is a clique of size at least $C$ with probability $1-\exp(-\Omega(m^2))$. By the union bound over $V\subset[n]$, we get that whp, every  set $V$ of size at least $\ln n\ln\ln n$ has a clique of size at least $C$. We also assume that this property holds deterministically in $G_n$.

Let us finally fix $V\subset[n]$ of size at least $\ln^{5v+2}n$ and find a weakly $F$-saturated subgraph in $H:=G_n[V]$ with at most $(c_F+3\varepsilon)|V|$ edges. Let us remove a set of size at most $(5v)^2(\ln n\ln\ln n)^{5v}$ vertices from $V$ and get rid of all bad $t$-sets, $t\in[5v]$, with respect to $V$. 
 From the remaining set we can remove at most $\ln n\ln\ln n$ vertices and decompose the remainder (denoted by $\tilde H$) into cliques of size at least $C$. By Lemma~\ref{lm:iranian}, there exists a weakly $F$-saturated spanning subgraph $H'\subset \tilde H$ with at most $(c_F+\varepsilon)|V(\tilde H)|$ edges. If $x\in V\setminus V(\tilde H)$ has at least $\ln n\ln\ln n$ neighbours in $\tilde H$, then we find a clique of size at least $C$ in this neighbourhood in $\tilde H$ and retain $v$ edges from $x$ to this clique. Otherwise, we retain all edges from $x$ to $\tilde H$. We get a weakly $F$-saturated subgraph of $G_n[V]$ with at most
$$
 (c_F+2\varepsilon)|V(\tilde H)|+\ln n\ln\ln n((5v)^2(\ln n\ln\ln n)^{5v}+\ln n\ln\ln n)<(c_F+3\varepsilon)|V|
$$
edges, completing the proof.







\end{proof}

\begin{lemma}
Fix an integer $v\geq 3$. Let $X=\left[\left\lfloor \frac{6}{p^2}\ln n\right\rfloor\right]$. Whp, for every two different vertices $x_1,x_2\in[n]$, in their common neighbourhood $N_X(x_1,x_2)$ in $X$ there exists a clique of size $v$.
\label{lm:star-like-choice}
\end{lemma}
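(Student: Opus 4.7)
The plan is a two-step probabilistic argument. In step (1), a Chernoff bound will show $|N_X(x_1,x_2)| = \Omega(\ln n)$ simultaneously for every pair $x_1\neq x_2$, and in step (2), Janson's inequality will show that any neighbourhood of this size, which inherits the $G(\cdot,p)$ distribution, contains a $K_v$ with probability $1-n^{-\omega(1)}$. Both error probabilities will be $o(n^{-2})$, hence will survive a union bound over the $\binom{n}{2}$ pairs.

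For step (1), observe that for any $x_1\neq x_2$, the random variable $|N_X(x_1,x_2)|$ has distribution $\mathrm{Bin}\bigl(|X|-|X\cap\{x_1,x_2\}|,\,p^2\bigr)$, since each $y\in X\setminus\{x_1,x_2\}$ lies in $N_X(x_1,x_2)$ iff both independent edges $x_1y$ and $x_2y$ are present. Its mean is $6\ln n - O(1)$, and the multiplicative Chernoff bound with $\delta$ close enough to $1$ (say $\delta=9/10$, so that $\delta^2\cdot 3>2$) yields
$$
\mathbb{P}\bigl(|N_X(x_1,x_2)|\leq \tfrac{3}{5}\ln n\bigr)\leq n^{-2.4}
$$
for all sufficiently large $n$. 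A union bound over the $\binom{n}{2}$ pairs then shows that whp $|N_X(x_1,x_2)|\geq \tfrac{3}{5}\ln n$ for every pair simultaneously.

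For step (2), condition on the set $N:=N_X(x_1,x_2)$. The event $\{N_X(x_1,x_2)=N\}$ is determined solely by edges and non-edges between $X$ and $\{x_1,x_2\}$, which are disjoint from the edges inside $N$; hence $G_n[N]\sim G(|N|,p)$ conditionally. Applying Janson's inequality to the indicators of $K_v$-copies in $G(m,p)$, with $\mu=\binom{m}{v}p^{\binom{v}{2}}$ and $\Delta=\sum_{S\neq T,\,|S\cap T|\geq 2}\mathbb{P}(S\text{ and }T\text{ are both cliques})$, the standard computation gives
$$
\frac{\mu^2}{\Delta}=\Omega\Bigl(\min_{2\leq s\leq v} m^s p^{\binom{s}{2}}\Bigr),
$$
and since $m=\Omega(\ln n)$ and $p,v$ are fixed constants, the minimum is attained at $s=2$, yielding $\mu^2/\Delta=\Omega(m^2 p)=\Omega((\ln n)^2)$. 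Therefore
$$
\mathbb{P}(G(m,p)\not\supseteq K_v)\leq \exp\bigl(-\Omega((\ln n)^2)\bigr)=n^{-\omega(1)},
$$
and a final union bound over the $\binom{n}{2}$ pairs completes the proof.

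The main (and mild) obstacle is balancing the two error probabilities so that they both comfortably beat $\binom{n}{2}$. The Janson estimate is essentially free because $p$ is a positive constant, which makes $\mu$ polynomial in $\ln n$; the Chernoff estimate is tighter, and it explains the specific constant $6/p^2$ in the definition of $X$: a notably smaller constant would give a Binomial mean too small to dominate $2\ln n$ after Chernoff.
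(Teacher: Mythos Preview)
Your proof is correct and follows essentially the same approach as the paper: a Chernoff bound to guarantee $|N_X(x_1,x_2)|=\Omega(\ln n)$ with error $o(n^{-2})$, then an edge-exposure/conditioning argument so that $G_n[N_X(x_1,x_2)]$ is distributed as $G(m,p)$, followed by Janson's inequality to find a $K_v$ with error $\exp(-\Omega(\ln^2 n))$, and a union bound over pairs. The only differences are cosmetic (the paper uses the threshold $\ln n$ rather than $\tfrac{3}{5}\ln n$, and suppresses the $\mu^2/\Delta$ computation).
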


\begin{proof}
Fix different $x_1,x_2\in[n]$. By the Chernoff bound, with probability $1-o(n^{-2})$, $|N_X(x_1,x_2)|\geq\ln n$. Thus, we may expose edges between $x_1,x_2$ and the rest of the graph and assume that  $|N_X(x_1,x_2)|\geq\ln n$ holds deterministically. By Janson's inequality~\cite[Theorem 2.18]{Janson}, there is a $v$-clique in $|N_X(x_1,x_2)|$ with probability $1-\exp(-\Omega(\ln^2n))$. The union bound over the choice of $x_1,x_2$ completes the proof.
\end{proof}

\begin{proof}[Proof of Theorem~\ref{th:asat-integer-random}.]
Let us assume that the conclusions of Lemmas~\ref{lm:asymptotic-stability-strong}~and~\ref{lm:star-like-choice} hold deterministically in $G_n$. Assume that $M_n =\left(E(G_n),\mathcal{I}_n\right)$ is a weakly $F$-saturated matroid with the maximum rank. 
Let us consider the spanning subgraph $H\subset G_n$ containing all edges adjacent to a vertex from $X=\left[\left\lfloor \frac{6}{p^2}\ln n\right\rfloor\right]$. Due to the conclusion of Lemma~\ref{lm:star-like-choice}, $H$ is weakly $F$-saturated in $G_n$. By Claim~\ref{cl:satgraph-preserve-rank}, $M_n$ has a basis $B\subset E(H)$. Let us choose  a constant $\varepsilon>0$ small enough (it is enough to satisfy $\varepsilon(1+c_F+\varepsilon)<\lfloor c_F\rfloor+1-c_F$). We let $v$ be the number of vertices in $F$. 

Assume that there are at least $(\ln n)^{5v+2}$ vertices $x_1,\ldots,x_m\in [n]\setminus X$ that are adjacent to at least $\lfloor c_F\rfloor +1$ edges of $B$. Let us consider subgraphs $B'\subset B$ and $G'\subset G_n$ induced by $X\sqcup\{x_1,\ldots,x_m\}$. Letting $K'$ to be the clique on $V(G')=V(B')$, we get, by the conclusion of Lemma~\ref{lm:asymptotic-stability-strong}, 
$$
\mathrm{rk}\text{-}\mathrm{sat}(G',F)\leq\mathrm{wsat}(G',F)\leq (c_F+\varepsilon)|V(G')|\leq
(c_F+\varepsilon)(1+\varepsilon)m.
$$
On the other hand, since taking induced subgraphs, preserves~\eqref{eq:sat-rank-def},
$$
 \mathrm{rk}\text{-}\mathrm{sat}(G',F)\geq\mathrm{rk}_{M_n}(E(G'))\geq|B'|\geq m(\lfloor c_F\rfloor +1)
$$
--- a contradiction. 
 Therefore, 
$$
 |B|\leq{|X|\choose 2}+(\ln n)^{5v+2}|X|+n\lfloor c_F\rfloor=n\lfloor c_F\rfloor+O(\log^{5v+3} n),
$$
completing the proof.
\end{proof}

\section{Weak saturation rank of other discrete structures}
\label{sc:hyper}

Weak saturation numbers were also studied for other host graphs~\cite{BTT,KMM,MS} as well as for other structures --- for certain directed graphs (see, e.g.,~\cite{Alon85,BTT}) and for hypergraphs (see, e.g.,~\cite{MS,PikhurkoPhD}). In many cases, the respective analogues of Lemma~\ref{lm:Kalai} give tight lower bounds for these structures as well. In particular, the linear matroid that proves the tight lower bound for $\mathrm{wsat}(K_n,K_s)$ suggested by Kalai in~\cite{Kalai85}, can be directly generalised to hypergraphs implying $\mathrm{wsat}(K^{(r)}_n,K^{(r)}_s)={n\choose r}-{n-s+r\choose r}$  for $r$-uniform complete hypergraphs. Tight lower bounds for complete directed multipartite hypergraphs  as well as for complete undirected multipartite hypergraphs with balanced $F$ were proven in~\cite{BTT} using an analogue of Lemma~\ref{lm:Kalai} and a `colourful' modification of a linear matroid suggested by Kalai in~\cite{Kalai85}. Pikhurko in~\cite{PikhurkoPhD} used a reformulation of Lemma~\ref{lm:Kalai} for hypergraphs to prove tight lower bounds on weak saturation numbers for complete $G$ and specific hypergraphs $F$ called pyramids as well as for certain directed hypergraphs.

The definition of the weak saturation number generalises to hypergraphs in a straightforward way: for hypergraphs $G,F$, the {\it weak $F$-saturation number} of $G$ --- denoted by  $\mathrm{wsat}(G,F)$ --- is the minimum number of edges in a spanning subhypergraph of $G$ so that the missing edges of $G$ can be recovered one be one, creating a copy of $F$ at every step. For hypergraphs, Lemma~\ref{lm:Kalai} admits the following generalisation. As for graphs, we call a matroid $M=(E(G),\mathcal{I})$ on the (hyper)edges of $G$ {\it weakly $F$-saturated}, if every copy of $F$ in $G$ is a cycle in $M$; the {\it weak $F$-saturation rank}  $\mathrm{rk}$-$\mathrm{sat}(G,F)$ of the hypergraph $G$ is the maximum rank of a weakly $F$-saturated matroid $M$ on the (hyper)edges of $G$.

\begin{lemma}
For any hypergraphs $G,F$, $\mathrm{wsat}(G,F)\geq\mathrm{rk}$-$\mathrm{sat}(G,F)$.
\label{lm:Kalai_hypergraphs}
\end{lemma}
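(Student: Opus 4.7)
The plan is to replay the proof of Lemma~\ref{lm:Kalai_multi} essentially verbatim, since the matroidal definition~\eqref{eq:sat-rank-def} of a weakly $F$-saturated matroid does not depend on whether the ``edges'' of the ambient structure are $2$-element subsets, multisets of pairs, or arbitrary subsets of the vertex set: all that is used is that each copy of $F$ is a cycle in the matroid $M$ on the ground set $E(G)$. Concretely, I would fix a weakly $F$-saturated subhypergraph $H\subseteq G$ with $|E(H)|=\mathrm{wsat}(G,F)$ and a matroid $M=(E(G),\mathcal{I})$ achieving the maximum rank $\mathrm{rk}\text{-}\mathrm{sat}(G,F)$, and it would then suffice to show that $\mathrm{rk}_M(E(H))=\mathrm{rk}\, M$, since in that case $\mathrm{rk}\text{-}\mathrm{sat}(G,F)=\mathrm{rk}_M(E(H))\leq|E(H)|=\mathrm{wsat}(G,F)$.

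To establish this equality, I would fix an $F$-bootstrap percolation sequence $H=H_0\subset H_1\subset\cdots\subset H_m=G$ in which $E(H_i)\setminus E(H_{i-1})=\{e_i\}$ and some copy $F_i\cong F$ lies in $H_i$ and contains $e_i$, and then proceed by induction on $i$. The inductive step is: since $E(F_i)$ is a cycle in $M$, the defining identity~\eqref{eq:sat-rank-def} gives $\mathrm{rk}_M(E(F_i)\setminus\{e_i\})=\mathrm{rk}_M(E(F_i))$, so $e_i$ lies in the matroid closure of $E(F_i)\setminus\{e_i\}\subseteq E(H_{i-1})$; hence adjoining $e_i$ to $E(H_{i-1})$ does not increase the rank, i.e. $\mathrm{rk}_M(E(H_i))=\mathrm{rk}_M(E(H_{i-1}))$. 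Iterating over $i=1,\ldots,m$ yields $\mathrm{rk}_M(E(H))=\mathrm{rk}_M(E(G))=\mathrm{rk}\,M$, as needed.

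Since this argument is a direct transcription of the multigraph proof, I do not foresee any real obstacle. The only place where one might have expected something hypergraph-specific to intervene is in making sense of ``a copy of $F$ in $G$'' as a subset of $E(G)$ on which~\eqref{eq:sat-rank-def} can be imposed; but this is already baked into the statement of the lemma, and the matroid-closure step is then a purely abstract consequence of the rank identity, insensitive to the combinatorial type of the underlying structure.
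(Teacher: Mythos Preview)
Your proposal is correct and follows exactly the approach the paper takes: the paper does not give a separate proof of Lemma~\ref{lm:Kalai_hypergraphs}, instead relying implicitly on the fact that the argument for Lemma~\ref{lm:Kalai_multi} goes through verbatim once ``edge'' is interpreted as a hyperedge. Your closure-based phrasing of the inductive step is equivalent to the paper's basis-extension phrasing and uses nothing beyond the cycle condition~\eqref{eq:sat-rank-def}.
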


In this section, we show that for every $r\geq 3$, for infinitely many $r$-uniform $F$, Lemma~\ref{lm:Kalai_hypergraphs} cannot be used to get even the asympotics (in $n$) of  $\mathrm{wsat}(K^{(r)}_n,F)$. Indeed, we show below that, for any $r$-uniform hypergraph $F$, there exists a non-negative {\it real} $a_F^{(r)}$ such that
\begin{equation}
\lim_{n\to\infty}\frac{\mathrm{rk}\text{-}\mathrm{sat}(K^{(r)}_n,F)}{n^{s(F)-1}}=a^{(r)}_F,
\label{eq:hypergaphs_limit}
\end{equation}
and $a_F^{(r)}\in\mathbb{Z}_{\geq 0}$ if $s(F)\leq 2$, where $s(F)$ is the {\it sharpness} of $F$, i.e. the minimum non-negative integer so that, for some $e\in E(F)$, some $S\subset e$ of size $s(F)$, and any $e'\in E(F)\setminus\{e\}$, $S\not\subset e'$. On the other hand, for every $r\geq 3$, it is easy to construct infinitely many hypergraphs $F$ such that $\lim_{n\to\infty}\mathrm{wsat}(K^{(r)}_n,F)/n^{s(F)-1}\in(0,1)$ while $s(F)=2$, see Appendix. We finally recall that, for every $r$-uniform $F$, $\lim_{n\to\infty}\mathrm{wsat}(K^{(r)}_n,F)/n^{s(F)-1}$ exists due to the recently resolved conjecture of Tuza~\cite{ST_Tuza}.

\begin{theorem}
For every $r$-uniform hypergraph $F$ there exists $a_F^{(r)}\in\mathbb{R}_{\geq 0}$ such that~\eqref{eq:hypergaphs_limit} holds. If $s(F)\leq 2$, then $a_F^{(r)}\in\mathbb{Z}$ and there exists $C_F^{(r)}\in\mathbb{Z}$ such that 
$$
\mathrm{rk}\text{-}\mathrm{sat}(K^{(r)}_n,F)=a_F^{(r)}n+C_F^{(r)}
$$ 
for all large enough $n$.
\label{th:hyper}
\end{theorem}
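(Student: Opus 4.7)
The plan is to follow the strategy of the proof of Theorem~\ref{th:asat-integer-1} with a hypergraph-adapted choice of the witness subhypergraph that exploits the sharpness $s:=s(F)$. Let $v=|V(F)|$, $U=[n]\setminus[v]$, $f(n):=\mathrm{rk}\text{-}\mathrm{sat}(K_n^{(r)},F)$, and fix once and for all an edge $e_0\in E(F)$ together with an $s$-subset $S\subset e_0$ that is not contained in any other edge of $F$ (which exists by the definition of sharpness).

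I would take $H$ to be the spanning subhypergraph of $K_n^{(r)}$ consisting of every $r$-edge $e$ with $|e\cap U|\leq s-1$; note that $|E(H)|=\Theta(n^{s-1})$. The first step is to verify that $H$ is weakly $F$-saturated by adding the missing edges $e^*\in E(K_n^{(r)})\setminus E(H)$ in order of increasing $t:=|e^*\cap U|\geq s$. For each such $e^*$ I would pick an arbitrary $s$-set $S^*\subset e^*\cap U$ and define an injection $\psi:V(F)\to[n]$ by setting $\psi(S)=S^*$, $\psi(e_0\setminus S)=e^*\setminus S^*$, and choosing $\psi(V(F)\setminus e_0)$ to be $v-r$ distinct vertices of $[v]\setminus\psi(e_0\setminus S)$ (which has size $v-r+t\geq v-r$). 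For an arbitrary $e\in E(F)\setminus\{e_0\}$ the hypothesis on $S$ gives $|e\cap S|\leq s-1$, while $|e\cap \psi^{-1}((e^*\setminus S^*)\cap U)|\leq t-s$ and no other vertex of $e$ is sent to $U$, so $|\psi(e)\cap U|\leq(s-1)+(t-s)=t-1$. Hence $\psi(e)$ is either in $H$ or was added at an earlier stage.

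The hypergraph analogue of Claim~\ref{cl:satgraph-preserve-rank}, which is immediate from the argument behind Lemma~\ref{lm:Kalai_hypergraphs}, then guarantees that any weakly $F$-saturated matroid $M_n$ of maximum rank admits a basis $B\subset E(H)$. Since every $e\in B$ satisfies $|e\cap U|\leq s-1$, averaging yields some $u\in U$ incident to at most $(s-1)f(n)/(n-v)$ elements of $B$; deleting $u$ (together with the hypergraph version of~\eqref{eq:asat_delete_vertex}, which holds because deletion of vertices preserves the cycle property~\eqref{eq:sat-rank-def}) gives
$$
f(n-1)\;\geq\;f(n)-\left\lfloor\tfrac{(s-1)f(n)}{n-v}\right\rfloor.
$$
A short algebraic manipulation shows that $h(n):=f(n)/\bigl[(n-v)(n-v-1)\cdots(n-v-s+2)\bigr]$ is non-increasing for large $n$, so $a_F^{(r)}:=\lim_{n\to\infty}f(n)/n^{s-1}=\lim_{n\to\infty}h(n)/(s-1)!$ exists in $\mathbb{R}_{\geq 0}$. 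For $s=1$ the recurrence collapses to $f(n-1)\geq f(n)$, forcing $f$ to be eventually a non-negative integer constant; for $s=2$ it is precisely the recurrence of Claim~\ref{eq:general_concluding_claim}, whose proof then gives $f(n)=a_F^{(r)}n+C_F^{(r)}$ with integer coefficients for large~$n$.

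The main obstacle is the first step: verifying that $H$ is weakly $F$-saturated requires threading the bootstrap ordering, the injectivity of $\psi$, and the key inequality $|e\cap S|\leq s-1$ into the single bound $|\psi(e)\cap U|\leq t-1$. Once this step is in place, the rest of the argument is a direct transcription of the proof of Theorem~\ref{th:asat-integer-1}.
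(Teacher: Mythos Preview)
Your proposal is correct and follows essentially the same approach as the paper's proof: the same witness subhypergraph $H=\{e:|e\cap U|\le s-1\}$, the same averaging/vertex-deletion argument yielding $f(n-1)\ge f(n)-\lfloor (s-1)f(n)/(n-v)\rfloor$, and the same monotonicity of $f(n)\big/\binom{n-v}{s-1}$ (your $h$ equals this up to the constant $(s-1)!$). The only harmless slip is the normalisation: since $(n-v)(n-v-1)\cdots(n-v-s+2)\sim n^{s-1}$, one has $a_F^{(r)}=\lim_{n\to\infty} h(n)$, not $\lim_{n\to\infty} h(n)/(s-1)!$.
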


\begin{proof}
Let $v=|V(F)|$, $s=s(F)$, and $f(n)=\mathrm{rk}$-$\mathrm{sat}(K^{(r)}_n,F)$. Let $M_n$ be a weakly $F$-saturated matroid on $E(K^{(r)}_n)$ with rank $f(n)$. Without loss of generality, we assume that $V(K^{(r)}_n)=[n]$. Let $H\subset K^{(r)}_n$ consist of all edges $e\in E(K^{(r)}_n)$ such that  $|e\cap([n]\setminus[v])|\leq s-1.$ It is easy to see that $H$ is weakly $F$-saturated in $K^{(r)}_n$ (see details in~\cite{Tuza}): missing edges can be recovered in ascending order of the cardinality of their intersection with $U:=[n]\setminus[v]$. Indeed, each such edge creates a copy of $F$ such that $V(F)\cap U=e\cap U$ and $S\subset e\cap U$, where $e$ and $S$ witness the definition of the sharpness of $F$.

Due to the generalisation of Claim~\ref{cl:satgraph-preserve-rank} to hypergraphs, $M_n$ has a basis $B\subset E(H)$. Let a vertex $u\in U$ have minimum degree in $B$. Since every edge of $B$ has at most $s-1$ vertices of $U$, we get that $u$ has degree at most $f(n)(s-1)/(n-v)$. Since deletion of vertices preserves cycles, we get that
\begin{equation}
f(n-1)=\mathrm{rk}\text{-}\mathrm{sat}(K^{(r)}_{n-1},F)\geq\mathrm{rk}_{M_n}(E(K^{(r)}_n\setminus\{u\}))\geq 
f(n)-\left\lfloor\frac{f(n)(s-1)}{n - v}\right\rfloor.
\label{eq:hyper_recursion}
\end{equation}
In particular, if $s=1$, then $f$ is not increasing and has non-negative integer values, implying $f(n)=C_F^{(r)}$ for some constant $C_F^{(r)}\in\mathbb{Z}_{\geq 0}$ and all large enough $n$. If $s=2$, then we get~\eqref{eq:th1-limit}, implying $\mathrm{rk}\text{-}\mathrm{sat}(K^{(r)}_n,F)=a_F^{(r)}n+C_F^{(r)}$ for all large enough $n$ where $a_F^{(r)}\in\mathbb{Z}_{\geq 0}$ and $C_F^{(r)}\in\mathbb{Z}$ due to Claim~\ref{eq:general_concluding_claim}. 
 For $s\geq 3$,~\eqref{eq:hyper_recursion} implies
$$
 f(n-1)\geq f(n)\left(1-\frac{s-1}{n-v}\right) =f(n)\frac{n-v-s+1}{n-v}=f(n)\frac{{n-v-1\choose s-1}}{{n-v\choose s-1}}.
$$
We then get that $f(n)/{n-v\choose s-1}$ decreases in $n$. Therefore, the limit $a_F^{(r)}$ in~\eqref{eq:hypergaphs_limit} indeed exists.
\end{proof}

We can also show similar asymptotic behaviour for complete multipartite hypergraphs. Let $K_{n,d}^{(r)}$ be a balanced complete $d$-partite $r$-uniform hypergraph with parts of size $n$. Then, for every $d$-partite $r$-uniform hypergraph $F$, there exists $\lim_{n\to\infty}\mathrm{rk}\text{-}\mathrm{sat}(K_{n,d}^{(r)},F)/ n^{s(F)-1}$, which is integer when $s(F)\leq 2$. In particular, for graphs, 
\begin{equation}
\mathrm{rk}\text{-}\mathrm{sat}(K_{n,d}^{(2)},F)=a_{F,d} n+C_{F,d}
\label{eq:rk-sat_multi}
\end{equation}
for some constant $a_{F,d}\in\mathbb{Z}_{\geq 0}$ and $C_{F,d}\in\mathbb{Z}$ and all large enough $n$. Let us prove~\eqref{eq:rk-sat_multi}. The statement for hypergraphs is generalised in the same way as the argument in Theorem~\ref{th:asat-integer-1} was adopted to derive Theorem~\ref{th:hyper}.

\begin{theorem}
For every integer $d\geq 2$ and every $d$-partite graph $F$, there exists a non-negative integer $a_{F,d}$ and an integer $C_{F,d}$ such that~\eqref{eq:rk-sat_multi} holds for all large enough $n$.
\label{th:multi_graphs_rk-sat}
\end{theorem}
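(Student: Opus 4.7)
The plan is to adapt the single-coordinate vertex-deletion argument of Theorem~\ref{th:asat-integer-1} to the multi-dimensional setting and then promote the resulting recursion to a closed affine formula via a functional-equation argument. I will work with the unbalanced multipartite complete graphs $K_{\vec n}^{(2)}$ for $\vec n=(n_1,\dots,n_d)\in\mathbb{Z}_{>0}^d$, set $f(\vec n):=\mathrm{rk}\text{-}\mathrm{sat}(K_{\vec n}^{(2)},F)$ and $v:=|V(F)|$, and try to read off the behaviour of $f(n,\dots,n)$.

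First I will establish, for each coordinate $i$ with $n_i>v$, the per-coordinate recursion
$$f(\vec n-e_i)\ \geq\ f(\vec n)-\left\lfloor\frac{f(\vec n)}{n_i-v}\right\rfloor$$
by the template of Theorem~\ref{th:asat-integer-1}. Fixing a maximum-rank weakly $F$-saturated matroid $M$ on $E(K_{\vec n}^{(2)})$, I will use the anchor subgraph $H$ consisting of all edges incident to a set $U=\bigsqcup_j U_j$ with $U_j\subset V_j$, $|U_j|=v$. The $d$-partite structure of $F$ makes $H$ weakly $F$-saturated in $K_{\vec n}^{(2)}$ (each missing edge between $V_i$ and $V_j$ is recovered through a copy of $F$ whose remaining $v-2$ vertices sit inside the anchors $U_k$, choosing a suitable permutation of the colour classes of $F$ onto the parts of $K_{\vec n}^{(2)}$). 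Claim~\ref{cl:satgraph-preserve-rank} then gives a basis $B\subset E(H)$ of $M$; because every edge of $H$ touches $U$, the total $B$-degree of $V_i\setminus U_i$ is at most $|B|=f(\vec n)$, so some $u\in V_i\setminus U_i$ has $B$-degree at most $f(\vec n)/(n_i-v)$, and deleting $u$ yields the stated inequality via an analogue of \eqref{eq:asat_delete_vertex}.

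Next, Claim~\ref{eq:general_concluding_claim} applied along the $n_i$ direction (with the other coordinates held fixed and large) will produce a representation
$$f(\vec n)=a_i(\vec n_{-i})(n_i-v)+C_i(\vec n_{-i})$$
with $a_i\in\mathbb{Z}_{\geq 0}$, $C_i\in\mathbb{Z}$, valid for every $n_i$ above some threshold $N_0(\vec n_{-i})$. The same anchor $H$ also certifies the linear bound $f(\vec n)\leq v(d-1)\sum_k n_k+O_d(1)$, so each $a_i$ is bounded by a constant depending only on $F$ and $d$. Dividing the per-coordinate recursion in direction $j\neq i$ by $n_i-v$ and letting $n_i\to\infty$ gives $a_i(\vec n_{-i}-e_j)\geq a_i(\vec n_{-i})(1-1/(n_j-v))$; because $a_i$ is a bounded integer, for $n_j$ sufficiently large this upgrades to $a_i(\vec n_{-i}-e_j)\geq a_i(\vec n_{-i})$, so $a_i$ is non-increasing in each $n_j$ and therefore eventually constant. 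The $S_d$-symmetry of $K_{\vec n}^{(2)}$ under permutations of equal-size parts forces these stable values to agree across $i$; call the common value $a$.

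Finally I will unravel the constant term. Once $a_i\equiv a$ for all $\vec n$ with all coordinates large enough, the identity $f(\vec n)-an_i=C_i(\vec n_{-i})-av$ says $f(\vec n)-an_i$ is independent of $n_i$; comparing the analogous identities for coordinates $i$ and $j$, the quantity $f(\vec n)-a(n_i+n_j)$ is independent of both, and iterating across all coordinates shows $f(\vec n)=a\sum_k n_k+C$ for a single integer constant $C$. Specialising to $n_1=\cdots=n_d=n$ gives $\mathrm{rk}\text{-}\mathrm{sat}(K_{n,d}^{(2)},F)=ad\cdot n+C$, which is \eqref{eq:rk-sat_multi} with $a_{F,d}=ad$ and $C_{F,d}=C$. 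The main obstacle will be the integer-stabilisation of $a_i$: the threshold $N_0(\vec n_{-i})$ above which the linear representation is valid could a priori depend badly on $\vec n_{-i}$, and the upgrade from the approximate inequality to the strict integer one needs the linear $O(\sum_k n_k)$ bound on $f$ to apply uniformly, so that stabilisation and the subsequent symmetry-based identification of the $a_i$'s take place on a single common range of sufficiently large $\vec n$.
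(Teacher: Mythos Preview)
Your approach is plausible but takes a substantially more complicated route than the paper, and the ``main obstacle'' you flag is a genuine one that the paper sidesteps entirely.

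The paper never leaves the diagonal: it does not introduce the unbalanced graphs $K_{\vec n}^{(2)}$ at all. With $M_n$ a maximum-rank weakly $F$-saturated matroid on $E(K_{n,d}^{(2)})$ and $B\subset E(H)$ a basis inside the same anchor graph $H$ you use, the paper finds a \emph{single tuple} $(u_1,\dots,u_d)\in (V_1\setminus U_1)\times\cdots\times(V_d\setminus U_d)$ whose combined $B$-degree is at most $\lfloor f(n)/(n-v)\rfloor$, and deletes all $d$ vertices simultaneously to land directly in $K_{n-1,d}^{(2)}$. The counting is immediate because every edge of $B$ has at most one endpoint outside $U=\bigsqcup_j U_j$: summing over all $(n-v)^d$ tuples, each such edge is counted $(n-v)^{d-1}$ times, so the average tuple meets at most $f(n)/(n-v)$ edges of $B$. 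This yields the one-variable recursion~\eqref{eq:th1-limit} for $f(n)=\mathrm{rk}\text{-}\mathrm{sat}(K_{n,d}^{(2)},F)$ itself, and Claim~\ref{eq:general_concluding_claim} finishes at once.

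Your coordinate-by-coordinate deletion is more ambitious---if it goes through it gives the affine formula for all unbalanced $K_{\vec n}^{(2)}$---but the price is exactly the threshold problem you name. Claim~\ref{eq:general_concluding_claim} applied in direction $i$ gives linearity in $n_i$ only for $n_i\geq N_0(\vec n_{-i})$, and your cross-coordinate step ``$f(\vec n)-a(n_i+n_j)$ is independent of both'' needs the linear representations in directions $i$ and $j$ to hold simultaneously, i.e.\ $n_i\geq N_0(\vec n_{-i})$ and $n_j\geq N_0(\vec n_{-j})$ at the same point. Since $N_0(\vec n_{-i})$ is governed by when $f(\vec n)/(n_i-v)$ first drops below $a+1$, and the only a priori upper bound on $f$ is of order $\sum_k n_k$, this threshold could in principle grow with $n_j$; nothing in your outline rules that out, so there is no guaranteed common region on which to glue the pieces. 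The paper's simultaneous-deletion trick makes the whole issue disappear.
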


\begin{proof}
Let $v=|V(F)|$ and $f(n)=\mathrm{rk}$-$\mathrm{sat}(G=K_{n,d}^{(2)},F)$. It is sufficient to prove~\eqref{eq:th1-limit} due to Claim~\ref{eq:general_concluding_claim}.

Assume that $M_n$ is a weakly $F$-saturated matroid on $E(G)$ with the maximum rank $f(n)$. Fix $v$-subsets $U_1\subset V_1,\ldots,U_d\subset V_d$ of the $d$ parts $V_1,\ldots,V_d$ of $G$. Let us consider the spanning subgraph $H\subset G$ containing all edges adjacent to a vertex from $U_1\sqcup\ldots\sqcup U_d$. Since $H$ is weakly $F$-saturated in $G$, by Claim~\ref{cl:satgraph-preserve-rank}, $M_n$ has a basis $B\subset E(H)$. Let a tuple $u_1\in V_1\setminus U_1,\ldots,u_d\in V_d\setminus U_d$ be such that the number of edges in $B$ having at least one vertex in $\{u_1,\ldots,u_d\}$ is minimum possible, denote this minimum number of edges by $\mu$. Let us count the number of pairs $(\mathbf{u}\in V_1\setminus U_1\times\ldots\times V_d\setminus U_d,e\in B)$ so that $e\cap\mathbf{u}\neq\varnothing.$ On the one hand, this number is at least $\mu(n-v)^d$. On the other hand, since every vertex from $V(G)\setminus(U_1\sqcup\ldots\sqcup U_d)$ participates in $(n-v)^{d-1}$ tuples $\mathbf{u}$, every edge from $B$ contains at most one vertex of $V(G)\setminus(U_1\sqcup\ldots\sqcup U_d)$, and the total number of edges in $B$ equals $f(n)$, we get $\mu(n-v)^d\leq f(n)(n-v)^{d-1}$, implying  $\mu\leq\left\lfloor f(n)/(n - v)\right\rfloor$.

It remains to delete vertices $u_1,\ldots,u_d$ from $G$ and observe that
$$
\mathrm{rk}\text{-}\mathrm{sat}(K_{n-1,d}^{(2)},F)\geq\mathrm{rk}_{M_n}(E(G\setminus\{u_1,\ldots,u_d\}))\geq \mathrm{rk}M_n-\left\lfloor\frac{f(n)}{n - v}\right\rfloor,
$$
implying~\eqref{eq:th1-limit} and completing the proof.
\end{proof}

In a similar way analogous results can be obtained for the weak saturation rank of directed multipartite graphs. Let us recall that the directed version of weak saturation numbers was studied in~\cite{Alon85,BTT,PikhurkoPhD}: Let $\overrightarrow{K}_{n,d}$ be a directed complete $d$-partite graph with $d$ labelled parts of size $n$ where edges are oriented from parts with smaller labels to parts with larger labels. Let $F$ be a directed subgraph of $\overrightarrow{K}_{n,d}$. Then the weak saturation number $\mathrm{wsat}\left(\overrightarrow{K}_{n,d},F\right)$ --- the minimum number of edges in a spanning subgraph of $\overrightarrow{K}_{n,d}$ such that the missing edges can be added one by one, each time creating a directed copy of $F$ --- is known for all complete directed $d$-partite $F$~\cite{Alon85,BTT} and it coincides with the weak $F$-saturation rank of $\overrightarrow{K}_{n,d}$. The {\it weak $F$-saturation rank}  $\mathrm{rk}$-$\mathrm{sat}\left(\overrightarrow{K}_{n,d},F\right)$ is the maximum rank of a matroid $M=\left(E\left(\overrightarrow{K}_{n,d}\right),\mathcal{I}\right)$ such that every copy of $F$ in $\overrightarrow{K}_{n,d}$ is a cycle in $M$. Then, for every directed graph $F$, that is a subgraph of $\overrightarrow{K}_{n_0,d}$ for some $n_0$, there exist integers $\overrightarrow{a}_{F,d}$ and $\overrightarrow{C}_{F,d}$ such that $\mathrm{rk}\text{-}\mathrm{sat}\left(\overrightarrow{K}_{n,d},F\right)=\overrightarrow{a}_{F,d} n+\overrightarrow{C}_{F,d}$ for all large enough $n$. Since the proof is identical with the proof of Theorem~\ref{th:multi_graphs_rk-sat}, we omit it.
















\section{Open questions}
\label{sc:discussions}

In Section~\ref{sc:new-method}, we show that multigraphs can be used to get tight lower bounds for $\mathrm{wsat}(K_n,F)$ when $\lim_{n\to\infty}\mathrm{wsat}(K_n,F)/n$ is not integer due to Lemma~\ref{lm:rational-from-multi}. It is natural to ask, whether for any graph $F$ there exists $k\in\mathbb{Z}_{>0}$ such that 
$$
\mathrm{wsat}(K_n,F)=\frac{1}{k}\cdot\mathrm{rk}\text{-}\mathrm{sat}\left(K_n^k,\{F^k_e,\,e\in E(F)\}\right).
$$

For random graphs, we are only able to prove that whp 
$$
\mathrm{rk}\text{-}\mathrm{sat}(G(n,p),F)\leq\left(\left\lfloor\frac{\mathrm{wsat}(K_n,F)}{n}\right\rfloor+o(1)\right) n
$$
for any constant $p\in(0,1)$ (see Section~\ref{sc:random}). Nevertheless, we believe that, for every $F$, the limit in probability of $\mathrm{rk}\text{-}\mathrm{sat}(G(n,p),F)/n$ exists and it is integer. It would be also interesting to know whether this limit equals $a_F=\lim_{n\to\infty}\mathrm{rk}\text{-}\mathrm{sat}(K_n,F)/n$ --- notice that the respective stability result for weak saturation numbers~\eqref{eq:asympt_random_wsat} is known~\cite{KMMT-R}. 

For $r$-uniform hypergraphs $F$ with $s(F)=2$, we prove in Section~\ref{sc:hyper} that the limit $\lim_{n\to\infty}\mathrm{rk}$-$\mathrm{sat}(K_n^{(r)},F)/n$ is integer and that the second-order term is $O(1)$. For larger $s(F)$, we only manage to prove that $\lim_{n\to\infty}\mathrm{rk}$-$\mathrm{sat}(K_n^{(r)},F)/n^{s(F)-1}$ exists. Thus, still we cannot prove that for some $F$ with $s(F)>2$, $\mathrm{rk}$-$\mathrm{sat}(K_n^{(r)},F)$ is strictly smaller than $\mathrm{wsat}(K_n^{(r)},F)$. It would be interesting to find such examples and to understand better the limiting behaviour of  $\lim_{n\to\infty}\mathrm{rk}$-$\mathrm{sat}(K_n^{(r)},F)$ for such~$F$.

In \cite{JT_val}, Jackson and Tanigawa defined a function $\mathrm{val}_{\mathcal{F}}:2^{E(K_n)}\to\mathbb{N}$ for a family $\mathcal{F}=\{F_1,F_2,\ldots\}$ of graphs in the following way (though the definition given in~\cite{JT_val} is different, the definition below is equivalent):
$$
\forall H\subset K_n\quad \mathrm{val}_{\mathcal{F}}(H) = \min_{H\subset G\subset K_n}\mathrm{wsat}(G, \mathcal{F}).
$$
They conjectured that if, for a given $\mathcal{F}$, there exists a unique maximal ($M_1\leq M_2$ if every every independent set in $M_1$ is independent in $M_2$) matroid $M$ on $E(K_n)$ such that each copy of every $F_i\in\mathcal{F}$ is its cycle ($M$ is {\it $\mathcal{F}$-cyclic}), then, for every $H\subset K_n$, $\mathrm{rk}_M(H)=\mathrm{val}_{\mathcal{F}}(H)$. Note that if $ \mathrm{val}_{\mathcal{F}}(H)$ is indeed the rank function of a matroid, then all copies of graphs from $\mathcal{F}$ are cycles for this matroid. Moreover, if this is the case,
$\mathrm{rk}\text{-}\mathrm{sat}(K_n,\mathcal{F})=\mathrm{val}_{\mathcal{F}}(K_n)=\mathrm{wsat}(K_n,\mathcal{F})$ due to the definition of $\mathrm{val}_{\mathcal{F}}$. Since, as we show in the paper, there exist singleton $\mathcal{F}$ such that the above equality does not hold, therefore $\mathrm{val}_{\mathcal{F}}$ is not the rank function of a matroid, and these $\mathcal{F}$ are good candidates to disprove the conjecture: if for such an $\mathcal{F}$ there are no two maximal $\mathcal{F}$-cyclic matroids, then the conjecture is false.

\section*{Acknowledgements}

The authors would like to thank Imre Leader for helpful comments on the paper.


\bibliographystyle{amsplain}

\begin{thebibliography}{99}


\bibitem{Adler} {\sc J. Adler}, {\sc U. Lev}, Bootstrap percolation: visualizations and applications, {\sl Braz. J. Phys.} {\bf 33} (2003) 641--644.

\bibitem{Alon85} {\sc  N. Alon}, An extremal problem for sets with applications to graph theory, {\sl J. Combin. Theory Ser. A}   {\bf 40} (1985) 82--89.

\bibitem{BBMR} {\sc J. Balogh}, {\sc B. Bollob\'{a}s}, {\sc R. Morris}, {\sc O. Riordan}, Linear algebra and bootstrap percolation, {\sl Journal of Combinatorial Theory A} {\bf 119}:6 (2012) 1328--1335.

\bibitem{bol} {\sc  B. Bollob\'{a}s},   Weakly  $k$-saturated graphs, Beitr\"{a}ge zur Graphentheorie   (Kolloquium, Manebach, 1967), Teubner, Leipzig, 1968, pp. 25--31.

\bibitem{BTT} {\sc D. Bulavka}, {\sc M. Tancer}, {\sc M. Tyomkyn}, Weak saturation of multipartite hypergraphs, {\sl Combinatorica} {\bf 43} (2023) 1081--1102.



\bibitem{ER-matroids} {\sc J. Edmonds}, {\sc G.-C. Rota}, Submodular set functions, {\sl Waterloo Combinatorics Conference}, 1966. 

\bibitem{Fontes} {\sc L.R. Fontes}, {\sc R.H. Schonmann}, {\sc V. Sidoravicius}, Stretched exponential fixation in stochastic Ising models at zero temperature, {\sl Comm. Math. Phys.}  {\bf 228} (2002) 495--518.

\bibitem{Frankl82} {\sc P. Frankl}, An extremal problem for two families of sets,  {\sl European J. Combin.}  {\bf 3} (1982) 125--127.

\bibitem{JT_val} {\sc B. Jackson}, {\sc S. Tanigawa}, Maximal matroids in weak order posets, {\sl Journal of Combinatorial Theory, Series B}, {\bf 165} (2024) 20--46.

\bibitem{Janson} {\sc S. Janson}, {\sc T. \L uczak}, {\sc A. Ruci\'{n}ski}, {\sl Random graphs}, J. Wiley \& Sons, 2000.

\bibitem{Kalai85} {\sc  G. Kalai}, Hyperconnectivity of graphs,  {\sl Graphs Combin.}   {\bf 1} (1985) 65--79.

\bibitem{Kalai84} {\sc  G. Kalai}, Weakly saturated graphs are rigid,  in: Convexity and graph theory (Jerusalem, 1981), North-Holland Math. Stud., 87, Ann. Discrete Math., 20, North-Holland, Amsterdam, 1984, pp. 189--190.

\bibitem{KMMT-R} {\sc  O. Kalinichenko}, {\sc  M. Miralaei}, {\sc A. Mohammadian}, {\sc  B. Tayfeh-Rezaie}, Weak saturation numbers in random graphs, 2023; https://arxiv.org/pdf/2306.10375.pdf.

\bibitem{Kal_Zhuk} {\sc  O. Kalinichenko}, {\sc  M. Zhukovskii}, Weak saturation stability, {\sl European Journal of Combinatorics} {\bf 114} (2023) 103777.

\bibitem{KMM} {\sc G. Kronenberg}, {\sc T. Martins}, {\sc N. Morrison}, Weak saturation numbers of complete bipartite graphs in the clique, {\sl Journal of Combinatorial Theory A} {\bf 178} (2021) 105357.

\bibitem{LeeStreinu} {\sc A. Lee}, {\sc I. Streinu}, Pebble game algorithms and sparse graphs, {\sl Discrete Mathematics} {\bf 308} (2008) 1425--1437.


\bibitem{Morris} {\sc R. Morris}, Zero-temperature Glauber dynamics on $\mathbb{Z}^d$, {\sl Probab. Theory Related Fields}  {\bf 149} (2011) 417--434.

\bibitem{MNS} {\sc N. Morrison}, {\sc J. A. Noel}, {\sc A. Scott}, Saturation in the hypercube and bootstrap percolation, {\sl Combinatorics, Probability \& Computing}, {\bf 26} (2017) 78--98.

\bibitem{MS} {\sc G. Moshkovitz}, {\sc A. Shapira}, Exact bounds for some hypergraph saturation problems, {\sl Journal of Combinatorial Theory B} {\bf 111} (2015) 242--248.

\bibitem{Neumann} {\sc J. von Neumann}, {\sl Theory of Self-Reproducing Automata}, Univ. Illinois Press, Urbana, 1966.

\bibitem{Oxley} {\sc J. G.~Oxley}, {\sl Matroid theory}, Oxford Univ. Press, 1992.

\bibitem{PikhurkoPhD} {\sc O. Pikhurko}, Extremal hypergraphs, {\sl PhD thesis}, University of Cambridge, 1999.

\bibitem{Pikhurko} {\sc O. Pikhurko}, Weakly saturated hypergraphs and exterior algebras, {\sl Combinatorics, Probability and Computing} {\bf 10} (2001) 435--351.

\bibitem{ST_Tuza} {\sc A. Shapira}, {\sc M. Tyomkyn}, Weakly saturated hypergraphs and a conjecture of Tuza, {\sl Proceedings of the American Mathematical Society} {\bf 151}:7 (2023) 2795--2805.

\bibitem{Spencer}  {\sc J. Spencer}, Counting extensions, {\sl Journal of Combinatorial Theory, Series A} {\bf 55}:2 (1990) 247--255.

\bibitem{TZ} {\sc N. Terekhov}, {\sc M. Zhukovskii}, Weak saturation in graphs: a combinatorial approach, arXiv:2305.11043, 2023.

\bibitem{Tuza} {\sc Z. Tuza},  Asymptotic growth of sparse saturated structures is locally determined. {\sl Discrete Math.} {\bf 108}:1-3 (1992) 397--402. Topological, algebraical and combinatorial structures. Frol\'{i}k's memorial volume.

\bibitem{Ulam} {\sc  S. Ulam}, Random processes and transformations, {\sl Proc. Internat. Congr. Math.} (1950) 264--275.


\end{thebibliography}

\section*{Appendix: a hypergraph with a non-integer limit}

Fix $r\geq 3$. Let $v_1\geq 2r-1,v_2\geq r+1$ be arbitrary integers. The desired family of $r$-uniform hypergraphs $F=F(v_1,v_2)$ is defined as follows. First, we consider a hypergraph $F_0$ on $v_1+v_2$ vertices that is a disjoint union of a $v_1$-cycle and a $v_2$-clique (a {\it $v$-cycle} is isomorphic to the hypergraph on $[v]$ with all $v$ edges comprising $r$ consecutive numbers in the cyclic order $(1,\ldots,v)$). Let $e_1,\ldots,e_m$ be all the missing edges of a clique on $V(F_0)$ that do not belong to $E(F_0)$. For every $i\in[m]$, we define $F_i=F_0\cup\{e_1,\ldots,e_i\}$. The hypergraph $F$ is a disjoint union of graphs isomorphic to $F_0,F_1,\ldots,F_m$. Let $v=|V(F)|$. 

\begin{claim}
$s(F)=2$ and $\lim_{n\to\infty}\mathrm{wsat}(K_n^{(r)},F)/n\in(0,1)$.
\end{claim}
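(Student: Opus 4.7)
My plan is to verify the two halves of the claim, $s(F)=2$ and $\lim_{n\to\infty}\mathrm{wsat}(K_n^{(r)},F)/n\in(0,1)$, separately.

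For $s(F)=2$, I will argue both bounds. On the lower side, $s(F)\geq 2$ because every vertex of $F$ has degree at least $r\geq 3$: a cycle vertex in any component $F_i$ lies in the $r$ cycle edges whose starting positions cross it, a clique vertex lies in $\binom{v_2-1}{r-1}\geq r$ clique edges (using $v_2\geq r+1$), and the extras in $F_i$ with $i\geq 1$ only raise degrees. For $s(F)\leq 2$, I will take the cycle edge $e=\{1,\ldots,r\}$ of $F_0$'s $v_1$-cycle and the pair $S=\{1,r\}\subset e$. Cycle edges $\{j,j+1,\ldots,j+r-1\}\Mod{v_1}$ through vertex $1$ correspond to $j\in\{v_1-r+2,\ldots,v_1,1\}$, those through $r$ to $j\in\{1,\ldots,r\}$; since $v_1\geq 2r-1$, these two sets meet only at $j=1$, so $S$ lies in $e$ alone among cycle edges of $F_0$. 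All other edges of $F$ (the clique part of $F_0$ and the other components $F_1,\ldots,F_m$) sit on vertex sets disjoint from $e$, so they cannot contain $S$.

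For the lower bound $\lim>0$, I will use the standard minimum-degree argument: in any weakly $F$-saturated $H\subset K_n^{(r)}$ and any vertex $x\in[n]$, the first step of the bootstrap process that adds an edge through $x$ embeds $x$ in an $F$-copy as a vertex of degree $\geq\delta(F)=r$, so $\deg_H(x)\geq r-1$. Summing and dividing by $r$ gives $|E(H)|\geq(r-1)n/r$, whence $\lim\geq(r-1)/r>0$.

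For the upper bound $\lim<1$, I will construct a weakly $F$-saturated hypergraph with $(v_1-1)n/v_1+O(1)$ edges, which is strictly below $n$ since $v_1\geq 2r-1\geq 5$. The construction places two disjoint seed copies of $F$ on a fixed $O(1)$-sized set $S_0\subset[n]$ (supplying two copies of every component $F_i$), and on each block of a partition of $[n]\setminus S_0$ into blocks of size $v_1$ puts a copy of the tight $v_1$-cycle minus one edge. The bootstrap then proceeds in three stages: (a) add each block's missing cycle edge, producing a full $v_1$-cycle on the block which, combined with the clique from one seed copy of $F_0$ and the intact $F_1,\ldots,F_m$-components from the other seed copies, yields an $F$-copy; (b) add within-block non-cycle edges using $F_i$-copies whose extras $e_1,\ldots,e_i$ all lie in the cycle part of $V(F_0)$, with the second seed copy of $F_0$ supplying the $F_0$-component whenever the first seed's clique is being "borrowed" by the new $F_i$-component; (c) add cross-block and seed-incident edges using higher $F_i$-copies whose extras span the cycle and clique parts of $V(F_0)$ in the pattern matching the shape of the new edge. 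The main obstacle is step (c): I must order $\{e_1,\ldots,e_m\}$ so that for every cycle/clique multipart type an extra of that type occurs in an early usable position, and then argue inductively that when adding a new edge $e$ as the image of some $e_k$ under a chosen cycle/clique-preserving mapping of $V(F_0)$ onto a $(v_1+v_2)$-subset of the current host, the images of $e_1,\ldots,e_{k-1}$ under that same mapping are already present. This is feasible because $\{e_1,\ldots,e_m\}$ enumerates all non-$F_0$ edges of $K_{v_1+v_2}^{(r)}$ and so contains edges of every shape relative to the cycle/clique partition, and the two seed copies permit vertex-disjoint assembly of the remaining $m$ components of the $F$-copy no matter which seed clique is being recycled.
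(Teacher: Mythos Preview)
Your arguments for $s(F)=2$ and for the lower bound are correct and essentially match the paper's (your minimum-degree bound $(r-1)/r$ is in fact tighter than the paper's $1/r$, but either suffices).

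For the upper bound your approach diverges from the paper's, and step~(c) as written has a genuine gap. The paper does \emph{not} build a global construction; instead it proves the recursive inequality
\[
\mathrm{wsat}(K_{n+v_1}^{(r)},F)\leq\mathrm{wsat}(K_n^{(r)},F)+v_1-1\qquad(n\geq 2v),
\]
and then invokes the existence of the limit (Shapira--Tyomkyn) to get $\lim\leq(v_1-1)/v_1$. The point of the recursion is that the ``seed'' is a full $K_n^{(r)}$: after recovering $K_n^{(r)}$ and the missing cycle edge, one fixes a $v_2$-subset $U\subset[n]$ and a single bijection $\phi_U:V(F_0)\to V(C_0)\sqcup U$, then adds $\phi_U(e_1),\ldots,\phi_U(e_m)$ \emph{in the given order}. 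No reordering of the $e_i$ is needed, and the remaining $m$ components of $F$ always fit in $[n]\setminus U$. Ranging over all $U$ covers every edge touching the new $v_1$ vertices.

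Your direct construction has the right edge count, but step~(c) does not go through as stated. When you add a new edge as $\phi(e_k)$, the clique part of $\phi$ must land on an existing $v_2$-clique; after step~(b) the only available ones lie inside a single block (and only if $v_2\le v_1$) or inside a seed $F_m$-component. So for a cross-block edge with vertices in $B_1$ and $B_2$ there is no admissible $\phi$: sending the clique into $B_2$ fails when $v_2>v_1$, and sending it into a seed clique makes the image of every mixed $e_j$ a $B_1$--seed edge, not your target. Moreover your seed is not itself a clique, so within-seed missing edges are a further case you do not address. The construction \emph{can} be salvaged by growing a core clique---merge one block with a seed $F_m$-clique, then absorb the next block into that core, and finally absorb the remaining seed components one cycle-part or clique-part at a time---but this is precisely the paper's recursion unrolled, and it is considerably more delicate than your sketch suggests.
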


\begin{proof}
Let $F_0$ be the `0-component' of $F$, $C_0$ be its cycle, and let $e$ be any edge $C_0$. Let $S\subset e$ consist of two opposite vertices of $e$. It is easy to see that $e$ and $S$ witness the definition of the sharpness, thus $s(F)=2$. Indeed, since $v_1\geq 2r-1$, any other edge of the cycle has at most one common vertex with $S$, and all the other connected components of $F$ are disjoint with $S$. On the other hand, every vertex from $V(F)$ belongs to at least 2 edges of $F$.

We will prove that $\lim_{n\to\infty}\mathrm{wsat}(K_n^{(r)},F)/n\in(0,1)$ by proving upper and lower bounds on $\mathrm{wsat}(K_n^{(r)},F)$ and recalling that the limit exists due to~\cite{ST_Tuza}. First, it is easy to observe that $\mathrm{wsat}(K_n^{(r)},F)\geq\frac{\delta(F)-1}{r}n$ where $\delta(F)$ is the minimum degree of $F$ (see details in~\cite{Tuza}), thus $\mathrm{wsat}(K_n^{(r)},F)\geq\frac{1}{r}n$. It remains to prove an upper bound. Let us show that, for large enough $n$, $\mathrm{wsat}(K_{n+v_1}^{(r)},F)\leq\mathrm{wsat}(K_n^{(r)},F)+v_1-1$. It would imply $\mathrm{wsat}(K_n^{(r)},F)\leq\frac{v_1-1}{v_1}n+O(1)$, completing the proof.

Let $n\geq 2v$ and let $H\subset K_n^{(r)}$ be weakly $F$-saturated in $K_n^{(r)}$. Let us show that a disjoint union of $H$ with a copy of $C_0\setminus e$, where $e$ is an arbitrary edge of $C_0$, is weakly saturated in $K_{n+v_1}^{(r)}$. Indeed, first of all, all missing edges of $G\cong K_n^{(r)}\supset H$ can be recovered. Then, the edge $e$ can be recovered as well, since a copy of $F$ excluding $C_0$, the cycle of $F_0$, is contained in $G$. Let us finally show that, for every $v_2$-subset $U\subset V(G)$, all edges comprising vertices from $U\sqcup V(C_0)$ can be recovered. We first recover an edge $e'_1$ such that $C_0\cup K_U\cup e'_1$, where $K_U$ is the clique on $U$, is isomorphic to $F_1$. It is possible since a copy of $F\setminus F_1$ is contained in $G\setminus U$. In a similar manner, we can recover all the missing edges on $U\sqcup V(C_0)$.
\end{proof}

\end{document}